\documentclass[12pt]{article}
\usepackage{amsmath,amsthm,amsfonts,amssymb, color,xcolor,subcaption,graphicx} %,leftindex}
\usepackage[normalem]{ulem}
\newtheorem{theorem}{Theorem}[section]

\newtheorem{lemma}[theorem]{Lemma}

\def\cB{\mathcal{B}}

\def\cF{\mathcal{F}}
\def\cG{\mathcal{G}}
\def\cH{\mathcal{H}}

\def\cZ{\mathcal{Z}}

\def\bE{\mathbb{E}}

\def\bR{\mathbb{R}}

\newcommand{\fm}{\mathfrak{m}}
\begin{document}
\title{Central Limit Theorem for Stochastic Nonlinear Heat Equation with Pure-Jump Lévy White Noise }

\author{ Matis Le Gall\footnote{DER de mathématiques, ENS Paris-Saclay, Gif-sur-Yvette, France. E-mail address: matis.le\_gall@ens-paris-saclay.fr.  } \and   Jinxin Wang\footnote{Corresponding author. University of Ottawa, Department of Mathematics and Statistics, 150 Louis Pasteur Private, Ottawa, Ontario, K1N 6N5, Canada. E-mail address: jwang023@uottawa.ca.}}

\date{September 24, 2026}
\maketitle

\begin{abstract}
\noindent
In this article, we consider the stochastic nonlinear heat equation driven by
L\'evy space-time white noise in dimension one. For the spatial average of the
solution, we prove quantitative and functional central limit theorems under
 $m_1+m_{2p}<\infty$ for some $p\in(1,\frac32)$. These results extend the Gaussian fluctuation theory for the
parabolic Anderson model to the nonlinear setting. The main new feature is a
minimum-type term in the second Malliavin derivative estimate caused by the
nonlinear coefficient.
\end{abstract}

\noindent {\em MSC 2020:} Primary 60H15; Secondary 60F05, 60G60, 60G51
%60H15=SPDEs
%60G60=random fields
%60G51: Processes with independent increments; L\'evy processes
%60F05 Central limit and other weak theorems

\vspace{1mm}

\noindent {\em Keywords:} stochastic partial differential equations, random fields, Malliavin calculus, Poisson random measure, L\'evy noise

%\pagebreak
%\tableofcontents

%\include{Preliminaries}
%\include{Theorem1}
%\include{H}
\section{Introduction}
The study of Gaussian fluctuations for spatial averages of stochastic heat
equations was initiated, in the setting of Gaussian space-time white noise, by
Huang, Nualart and Viitasaari \cite{HNV20}. They proved
a {\em quantitative central limit theorem (QCLT)} and a {\em functional central limit theorem (FCLT)} for the
normalized centered spatial integral of the solution
over $[-R,R]$ as $R\to\infty$,
\begin{equation}
\label{FR-old}
 F_R(t)
:=
\frac{1}{\sigma_R}
\left(
\int_{-R}^{R} u(t,x)\,dx
-
\bE\int_{-R}^{R} u(t,x)\,dx
\right),
\end{equation}
where $R>0$,  $u(t,x)$ is the solution of the stochastic heat equation driven by Gaussian space-time white noise considered in \cite{HNV20}, and $\sigma_R^2
=
\operatorname{Var}
\left(
\int_{-R}^{R} u(t,x)\,dx
\right)$. Subsequent studies extended these results to spatially colored Gaussian
noise~\cite{HNVZ20}, delta initial
data~\cite{KNP21}, temporally colored parabolic Anderson
models~\cite{NXZ22}, and systems of stochastic heat
equations~\cite{NS24}.

\medskip

We develop the corresponding fluctuation theory for a nonlinear stochastic heat equation driven by L\'evy space-time white noise. Specifically, we consider the equation with
constant initial condition
\begin{equation}
\label{nonlinear}
\begin{cases}
\displaystyle
\frac{\partial u}{\partial t}(t,x)
=
\frac{1}{2}\frac{\partial^2 u}{\partial x^2}(t,x)
+\sigma(u(t,x))\dot L(t,x),
\quad t>0,\ x\in\bR,\\[0.3em]
u(0,x)=1,\quad x\in\bR,
\end{cases}
\end{equation}
where $\dot L$ denotes the {\em L\'evy white noise} on
$\bR_+\times\bR$, and $\sigma:\bR\to\bR$ is a deterministic globally
Lipschitz function.

\medskip

 Let $N$ be a Poisson random measure on
$\bR_+\times\bR\times\bR_0$, with intensity $dtdx\nu(dz)$, and let
$\widehat N$ be its compensated version, where $\bR_0=\bR\setminus\{0\}$.
The L\'evy white noise $L$ is defined by
\begin{equation}
\label{def-L}
L(\varphi)
=
\int_{\bR_+\times\bR\times\bR_0}
\varphi(t,x)z\,
\widehat N(dt,dx,dz),
\qquad
\varphi\in L^2(\bR_+\times\bR).
\end{equation}
 For $p\ge1$, set $m_p:=\int_{\bR_0}|z|^p\nu(dz)$. Throughout, we
assume that $0<m_2<\infty$.

\medskip

\medskip

A predictable process $\{u(t,x):t\ge0,x\in\bR\}$ is called a mild solution of
\eqref{nonlinear} if
\begin{equation}
\label{solution}
u(t,x)
=
1+
\int_0^t\int_{\bR}
G_{t-s}(x-y)\sigma(u(s,y))L(ds,dy),
\end{equation}
where
\[
G_t(x)
=
\frac{1}{\sqrt{2\pi t}}
\exp\left(-\frac{x^2}{2t}\right),
\qquad t>0,\ x\in\bR,
\]
is the heat kernel. Existence for nonlinear stochastic heat equations driven by general
L\'evy noise was established in \cite{C17}, while path properties and
intermittency were studied in \cite{CDH19} and \cite{CK19}, respectively.
 These studies allow for more general L\'evy noises, for which the
finite-variance condition $m_2<\infty$ need not hold. For the finite-variance L\'evy white noise considered here, existence
and uniqueness of the random-field solution follow from
\cite{BN17}.
Throughout the paper we assume that $\sigma(1)\neq0$.
Otherwise, $u(t,x)\equiv1$ is the unique solution of
\eqref{nonlinear}, and the fluctuation problem is trivial. Since the stochastic integral in \eqref{solution} is centered,
$\bE[u(t,x)]=1$.
In the present paper,  our object of study is the centered spatial average
\begin{equation}
\label{FR-new}
F_R(t)
:=
\int_{-R}^{R}(u(t,x)-1)\,dx,
\qquad R>0.
\end{equation}

\medskip

When $\sigma(u)=u$, equation \eqref{nonlinear} becomes the
{\em parabolic Anderson model} (pAm) driven by L\'evy white noise. Spatial ergodicity, the limiting
covariance, a QCLT, and an FCLT for \eqref{FR-new} were established for that
model in \cite{BLW26}. The present work shows that this Gaussian fluctuation
picture persists for a globally Lipschitz nonlinear coefficient.

\medskip

A standard way to prove Gaussian fluctuations for spatial averages of SPDEs is
the Malliavin--Stein approach.  The key input  is a
$p$-moment estimate for the Malliavin derivatives of the solution.
Compared with the pAm analysis in \cite{BLW26}, the nonlinear coefficient
$\sigma$ does not change the structure of the first Malliavin derivative
estimate. More precisely, we prove that for any $p\in[2,3)$ such that
$m_p<\infty$,
\begin{equation}
\label{Du}
\big\|D_{r,y,z}u(t,x)\big\|_p
\le
C_{T,p}|z|\,g_{t-r}^{(p)}(x-y).
\end{equation}
 where $g_s^{(p)}(x)
:=
G_s(x)+G_s(x)^{2/p}$. The restriction $p<3$ comes from the integrability of the heat
kernel. For the stochastic nonlinear wave equation driven by
L\'evy white noise, Balan and Zheng \cite{BZ26} use a Poisson
Malliavin--Stein bound involving only first Malliavin derivatives.
Their proof relies on estimates for the fourth moment of the
first Malliavin derivative, so their argument cannot be applied
directly in our setting.

\medskip

We therefore use a second-order Poincar\'e inequality on the Poisson
space, which requires an estimate for $D^2u(t,x)$. This is where the nonlinear
coefficient creates a new minimum-type term. For $r_1<r_2$, the pAm
contribution has the product form
\[
|z_1z_2|\,
g_{t-r_2}^{(p)}(x-y_2)
g_{r_2-r_1}^{(p)}(y_2-y_1),
\]
whereas the nonlinear coefficient produces the additional term
\begin{equation}
\label{min-term}
\min\left\{
|z_1|g_{t-r_1}^{(p)}(x-y_1),
|z_2|g_{t-r_2}^{(p)}(x-y_2)
\right\}.
\end{equation}
The product term is already present in \cite{BLW26}. The minimum term
has no analogue in the pAm, and its control is the main technical contribution
of this paper.

\medskip

 For $T>0$, let $D[0,T]$ denote the space of c\`adl\`ag functions on
$[0,T]$. We write $J_1$ for the Skorokhod topology and $U$ for the topology of
uniform convergence. Our main results are as follows.

\begin{theorem}[Quantitative CLT]
\label{QCLT}
Assume that there exists $p\in(1,\frac32)$ such that
\begin{equation}
\label{mp-m2p}
m_1+m_{2p}<\infty.
\end{equation}
Then, for every $t>0$,
\[
{\rm dist}\left(\frac{F_R(t)}{\sigma_R(t)},Z\right)
\le C_t R^{-\left(1-\frac1p\right)} \qquad \text{for } R\ge1.
\]
Here $C_t>0$ depends on $t$, $Z$ is a standard normal random variable,
$\sigma_R^2(t)=\operatorname{Var}(F_R(t))$, and ${\rm dist}$ denotes any one
of the distances $d_W$, $d_{FM}$, and $d_K$, namely the $1$-Wasserstein,
Fortet--Mourier, and Kolmogorov distances, respectively.
\end{theorem}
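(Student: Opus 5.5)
We plan to use the second-order Poincaré inequality on the Poisson space (Last--Peccati--Schulte type). For $F=F_R(t)/\sigma_R(t)$ with $\bE F=0$ and $\operatorname{Var}F=1$, it bounds $d_W(F,Z)$ by $\gamma_1+\gamma_2+\gamma_3$, and $d_K$ by the same quantities plus $\gamma_4,\gamma_5,\gamma_6$. Each $\gamma_i$ is an integral, over the product of the space $\bR_+\times\bR\times\bR_0$ with measure $\fm(d\xi)=dr\,dy\,\nu(dz)$, of moments of $D_\xi F$ and $D^2_{\xi_1,\xi_2}F$. The Fortet--Mourier distance is dominated by $d_W$, so it needs no separate treatment.

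The natural route uses $L^4$ norms, which are not available here. We therefore use the $p$-version of these bounds, with Hölder exponents chosen so that only $\|D_\xi u\|_{2p}$ and $\|D^2 u\|_{2p}$ enter; this is where $m_{2p}<\infty$ is needed. For example, $\gamma_1$ involves
\[
\int \big(\bE[(D_{\xi_1}F)^2(D_{\xi_2}F)^2]\big)^{1/2}\big(\bE[(D^2_{\xi_1,\xi_3}F)^2(D^2_{\xi_2,\xi_3}F)^2]\big)^{1/2},
\]
and we control each expectation by products of $L^{2p}$-type norms, interpolating where needed. We then insert $D_\xi F_R=\int_{-R}^R D_\xi u(t,x)\,dx$ together with \eqref{Du} at exponent $2p\in(2,3)$, which is where $p<3/2$ enters. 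In addition we use the second derivative bound from the paper: for $r_1<r_2$,
\[
\|D^2_{\xi_1,\xi_2}u(t,x)\|_{2p}\lesssim |z_1z_2|\,g^{(2p)}_{t-r_2}(x-y_2)\,g^{(2p)}_{r_2-r_1}(y_2-y_1)+\min\Big\{|z_1|g^{(2p)}_{t-r_1}(x-y_1),\,|z_2|g^{(2p)}_{t-r_2}(x-y_2)\Big\}.
\]
The $z$-integrals produce moments $m_k$ with $k\le 2p$, and the low-order powers are handled using $m_1<\infty$.

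The order of steps is as follows. First, we show $\sigma_R^2(t)\asymp R$. The Clark--Ocone/Poincaré covariance formula gives $\operatorname{Cov}(u(t,x),u(t,x'))=m_2\int_0^t\int \bE[\sigma(u(s,y))^2]G_{t-s}(x-y)G_{t-s}(x'-y)\,dy\,ds+\ldots$, and the lower bound uses $\sigma(1)\ne 0$ as in \cite{BLW26}. Second, we estimate each $\gamma_i$ by integrating the heat-kernel bounds in $x\in[-R,R]$. After integrating over $x$, each $\int g^{(2p)}_s(x-y)\,dx$ is bounded uniformly, since $G_s^{1/p}$ integrates to $C s^{(1-1/p)/2}$, which is finite. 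The product terms then give the same rates as in the pAm case of \cite{BLW26}, namely $\gamma_i\lesssim R^{-(1-1/p)}$. The rate arises because, after the normalisation $\sigma_R^{-3}$ or $\sigma_R^{-2}$, the remaining spatial integrals produce a factor $R^{1/p}$ from integrating $(\int_{-R}^R\cdots dx)^{\text{power}}$ against the $L^p$-type norms.

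The main obstacle is the minimum term \eqref{min-term}. It does not factorise, so the $y_1$ and $y_2$ integrals are not decoupled by the kernel $g_{r_2-r_1}(y_2-y_1)$. Our plan is to bound $\min\{a,b\}\le a^{\theta}b^{1-\theta}$ with a suitable $\theta\in[0,1]$, typically $\theta=1/2$ or a $p$-dependent choice. This turns the minimum into a product $|z_1|^\theta|z_2|^{1-\theta}\big(g^{(2p)}_{t-r_1}(x-y_1)\big)^\theta\big(g^{(2p)}_{t-r_2}(x-y_2)\big)^{1-\theta}$. The fractional powers of $z$ are integrable near $0$ and at infinity by $m_1+m_{2p}<\infty$, and fractional powers of heat kernels remain integrable in space with integrable singularities in time. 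Where integrability in $z$ fails for the chosen $\theta$, we split the $z$-domain into $|z|\le1$ and $|z|>1$ and use the minimum differently on each piece. With this in place, the contraction-type integrals in $\gamma_1,\gamma_2$ reduce to expressions of the same form as the product case, and they yield the same order $R^{-(1-1/p)}$. Collecting these estimates gives the stated bound for $d_W$, $d_{FM}$ and $d_K$.
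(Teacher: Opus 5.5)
Your overall architecture matches the paper's. You use a second-order Poincar\'e inequality, obtain $L^{2p}$ bounds on $DF_R(t)$ and $D^2F_R(t)$ by integrating the $Du$, $D^2u$ estimates over $[-R,R]$, and decouple the new minimum term with $\min\{a,b\}\le a^\theta b^{1-\theta}$. The gap is the inequality itself, which is the crux of the proof.

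\textbf{The Last--Peccati--Schulte bound cannot be used here.} The quantities you write down involve $\bE[(D_{\xi_1}F)^2(D_{\xi_2}F)^2]$, $\bE[(D^2_{\xi_1,\xi_3}F)^2(D^2_{\xi_2,\xi_3}F)^2]$, $\int\bE|D_\xi F|^3\,\fm(d\xi)$ and $\bE(D_\xi F)^4$. No choice of H\"older exponents or interpolation controls these by $\|D_\xi F\|_{2p}$ and $\|D^2_{\xi_1,\xi_2}F\|_{2p}$ when $2p<3$. H\"older applied to $\bE[X^2Y^2]$ always needs an $L^q$ norm with $q\ge 4$, and interpolation only moves between exponents you already control. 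These quantities are genuinely out of reach:
\begin{itemize}
\item The hypothesis allows $m_3=m_4=\infty$. The leading term $z\,\varphi_{t,R}(r,y)\sigma(u(r,y))$ of $D_{r,y,z}F_R(t)$ then makes $\int\bE|D_\xi F|^3\,\fm(d\xi)$ of order $m_3$, hence in general infinite. Your $\sigma_R^{-3}$ normalisation belongs to this unusable term.
\item Even for nice $\nu$, the moment theory for $u$ and $Du$ stops before exponent $3$, because $\int_0^t\int_{\bR}G_s^q(y)\,dy\,ds<\infty$ only for $q<3$. This is exactly why the paper says the fourth-moment argument of \cite{BZ26} does not transfer.
\end{itemize}

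\textbf{What is needed.} You need a second-order Poincar\'e inequality that is intrinsically of $L^{2p}$ type. The paper uses Trauthwein's bound \cite[Theorem~3.4]{T25}, which comes from $p$-Poincar\'e inequalities for Skorohod integrals. It has seven quantities, for instance
\[
\gamma_1^p\lesssim \sigma_R^{-2p}(t)\int_{\bf Z}\Big[\int_{\bf Z}\|D_{\xi_2}F_R(t)\|_{2p}\,\|D^2_{\xi_1,\xi_2}F_R(t)\|_{2p}\,\fm(d\xi_2)\Big]^p\fm(d\xi_1),
\]
together with analogous terms built from $\|D^2F_R\|_{2p}^2$, $\|D^2F_R\|_{2p}^{2p}$, and so on. The outer $L^p(\fm)$ structure is what produces the rate: each $\gamma_i^p\lesssim R^{-p}\cdot R$, so $\gamma_i\lesssim R^{-(1-1/p)}$.

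\textbf{The minimum term within this framework.} Your interpolation idea then works, but $\theta$ must be tied to $p$ so that the $\nu$-integrals produce only $m_1$ and moments of order in $[1,2p]$. In $\gamma_1$, the choice $\theta=1/p$ gives $m_1m_{2-1/p}^p$. By contrast, $\theta=1/2$ produces $m_{p/2}$, which $m_1+m_{2p}<\infty$ does not control. The paper uses $\theta=1/p,\ 1/(2p),\ 1-1/p,\ 2p-2$ for the various terms, followed by Young's inequality and $(K^{(p)}_t)^\theta\lesssim K^{(p)}_{t/\theta}$.
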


\begin{theorem}[Functional CLT]
\label{FCLT}
Under the hypotheses of Theorem~\ref{QCLT}, for every $R>0$, the process
$\{F_R(t)\}_{t\geq0}$ has a c\`adl\`ag modification (denoted also $F_R$).
Moreover, for any $T>0$,
\[
\frac{1}{\sqrt R}F_R(\cdot)\xrightarrow{d}\cG(\cdot)
\quad \mbox{in $(D[0,T],J_1)$ as $R\to\infty$},
\]
where $\{\cG(t)\}_{t\geq0}$ is a centered continuous Gaussian process with covariance
\[
\bE[\cG(t)\cG(s)]
=\Sigma_{t,s}
:=2m_2\int_0^{t\wedge s}\bE|\sigma(u(r,0))|^2\,dr.
\]
The convergence also holds in $(D[0,T],U)$.
\end{theorem}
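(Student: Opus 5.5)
The functional CLT follows the standard three-step scheme: convergence of the covariance, convergence of the finite-dimensional distributions, and tightness. Existence of a càdlàg modification comes from the tightness estimate via a Kolmogorov–Chentsov-type criterion for càdlàg processes.

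\textbf{Step 1 (covariance).} By the Itô isometry for the compensated Poisson integral and Fubini, using $\bE[u]=1$,
\[
\bE[F_R(t)F_R(s)]=m_2\int_0^{t\wedge s}\!\!\int_{\bR}\int_{-R}^R\!\!\int_{-R}^R G_{t-r}(x-y)G_{s-r}(x'-y)\,\bE[\sigma(u(r,y))^2]\,dx\,dx'\,dy\,dr .
\]
By stationarity of $u$ in $x$ (constant initial data), $\bE[\sigma(u(r,y))^2]=\bE|\sigma(u(r,0))|^2$. The $dx\,dx'\,dy$ integral equals $\int_{\bR}(\int_{-R}^R G_{t-r}(x-y)dx)(\int_{-R}^R G_{s-r}(x'-y)dx')dy$. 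Dividing by $R$, this tends to $2$ by dominated convergence, since the integrand tends to $\mathbf 1_{[-R,R]}(y)$ up to boundary layers of width $O(1)$. This gives $R^{-1}\bE[F_R(t)F_R(s)]\to\Sigma_{t,s}$. In particular $\sigma_R^2(t)\sim R\,\Sigma_{t,t}$, and $\Sigma_{t,t}>0$ for $t>0$ because $\sigma(1)\ne0$ and $r\mapsto\bE|\sigma(u(r,0))|^2$ is continuous with value $\sigma(1)^2$ at $r=0$.

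\textbf{Step 2 (f.d.d.).} For $0<t_1<\dots<t_n\le T$ and real $\lambda_i$, set $X_R=R^{-1/2}\sum_i\lambda_iF_R(t_i)$. I would apply the multivariate version of the second-order Poincaré inequality on Poisson space used for Theorem~\ref{QCLT} (Cramér–Wold suffices). That inequality bounds $d_W$ between $X_R$ and a Gaussian with variance $\operatorname{Var}(X_R)$ by terms built from $\|DX_R\|$ and $\|D^2X_R\|$. These are controlled exactly as in the proof of Theorem~\ref{QCLT}, using \eqref{Du} and the $D^2$ estimate including the minimum term \eqref{min-term}, since each $F_R(t_i)$ enjoys those bounds uniformly for $t_i\le T$. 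They therefore vanish as $R\to\infty$ at rate $R^{-(1-1/p)}$. Combined with Step 1, $X_R\to N(0,\sum\lambda_i\lambda_j\Sigma_{t_i,t_j})$.

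\textbf{Step 3 (tightness and modification).} This is the main obstacle, because L\'evy noise gives only finite moments of order $2p<3$, and a bare second moment increment bound $|t-s|$ is insufficient for the Kolmogorov criterion. I would use the criterion for $D[0,T]$ (Billingsley, Thm.~13.5):
\[
\bE\big[|X(t)-X(s)|^{\beta}|X(s)-X(r)|^{\beta}\big]\le C|t-r|^{1+\delta},\quad r<s<t .
\]
Write $u(t,x)-u(s,x)$ as a stochastic integral over $(s,t]$ plus an integral over $[0,s]$ with kernel $G_{t-r}-G_{s-r}$. After spatial integration over $[-R,R]$ the kernel difference becomes $\int_{-R}^R(G_{t-r}-G_{s-r})(x-y)dx$, which is small (of order $|t-s|^{1/2}$ times boundary terms). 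Using Rosenthal/BDG inequalities for Poisson integrals with the moment $m_{2p}$, together with $L^q$-moment bounds on $\sigma(u)$, I aim for
\[
R^{-p}\,\bE|F_R(t)-F_R(s)|^{2p}\le C|t-s|^{p},
\]
uniformly in $R\ge1$. Then Cauchy–Schwarz with $\beta=p$ gives $\bE[|\Delta_1|^{p}|\Delta_2|^{p}]\le C|t-r|^{p}$ with $p>1$. Where the product structure of $\Delta_1,\Delta_2$ is needed, I would use the conditional independence of the noise on $(s,t]$ given $\cF_s$. The same bound at fixed $R$ yields the càdlàg modification.

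\textbf{Conclusion.} Tightness, the f.d.d. convergence, and continuity of the limit $\cG$ (from $\bE|\cG(t)-\cG(s)|^2\le C|t-s|$ plus Gaussianity) give $J_1$ convergence. Since the limit is continuous, $J_1$ convergence upgrades to convergence in $(D[0,T],U)$.
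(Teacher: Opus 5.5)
Your Steps 1 and 2, and the final passage from $J_1$ to $U$, are fine. Step 3, however, has a genuine gap: the bound $R^{-p}\bE|F_R(t)-F_R(s)|^{2p}\le C|t-s|^p$ is false.

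Apply Rosenthal's inequality to the new-noise piece $\int_s^t\int_{\bR}\varphi_{t,R}(r,y)\sigma(u(r,y))L(dr,dy)$. Besides the quadratic term $(R(t-s))^p$, it produces the jump term $m_{2p}\int_s^t\int_{\bR}\varphi_{t,R}(r,y)^{2p}\,\bE|\sigma(u(r,y))|^{2p}\,dy\,dr$, which is of order $R(t-s)$. After normalization you only get $(t-s)^p+R^{1-p}(t-s)$, and the second term dominates as soon as $t-s<1/R$.

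This is not an artifact of the inequality. If $N$ has an atom $(s_0,y_0,z_0)$ with $|y_0|<R$, then $\varphi_{t,R}(s_0,y_0)\to1$ as $t\downarrow s_0$, so $F_R$ jumps by $z_0\,\sigma(u(s_0,y_0))$ at time $s_0$. Such atoms fall in a time window of length $t-s$ with probability of order $R(t-s)$. Hence no bound $\bE|F_R(t)-F_R(s)|^{2p}\le C_R|t-s|^{1+\delta}$ can hold even for fixed $R$, since Kolmogorov's criterion would then give a \emph{continuous} modification. So your route to the c\`adl\`ag modification fails as well.

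Inside Billingsley's product criterion, Cauchy--Schwarz discards exactly what makes that criterion work for jump processes: jumps in two adjacent intervals have probability of order $R^2(t-s)(s-r)$. For increments below the scale $1/R$, it leaves you with at best $R^{1-p}\sqrt{(t-s)(s-r)}\le R^{1-p}(t-r)$, i.e.\ exponent $1$, which is not enough. Your appeal to conditional independence given $\cF_s$ is where all the work would have to be done. You do not say how to control the resulting mixed moments with only $2p<3$ moments of $\sigma(u)$.

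The missing idea is the decomposition used in the paper. By stochastic Fubini, $F_R=C_R+M_R$, where $M_R(t)=\int_0^t\int_{\bR}\mathbf 1_{[-R,R]}(y)\sigma(u(s,y))L(ds,dy)$ and $C_R$ is built on the kernel $\varphi_{t,R}(r,y)-\mathbf 1_{[-R,R]}(y)$. The tightness estimates of \cite{BLW26} are then run on each piece. The only new input is $\sup_{[0,T]\times\bR}\bE|\sigma(u)|^{2p}<\infty$, from Theorem~\ref{thm-u-pth-moment-nonlinear} with exponent $2p<3$.

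All jumps of $F_R$ sit in $M_R$. It is a martingale in $t$, so it has a c\`adl\`ag version and can be controlled by martingale tools rather than increment moments. For instance, Aldous' criterion for $R^{-1/2}M_R$ reduces to $R^{-1}\bE\int_\tau^{\tau+h}\int_{-R}^{R}\sigma(u(s,y))^2\,dy\,ds\lesssim h^{1-1/p}$ for stopping times $\tau$. This follows from H\"older's inequality and the $2p$-moment bound, and it shows why $p>1$ matters.

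The kernel of $C_R$ tends to $0$ as $t\downarrow r$ and is concentrated in boundary layers of width $\sqrt{t-r}$ around $\pm R$. So the Rosenthal jump term for the increment $C_R(t)-C_R(s)$ is of order $(t-s)^{3/2}$, uniformly in $R$. Thus $C_R$ does satisfy a Kolmogorov bound, has a continuous version, and $R^{-1/2}C_R\to0$ in probability uniformly on $[0,T]$.
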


Section~2 establishes the required moment bounds for $u$, $Du$, and
$D^2u$. Section~3 applies these bounds to the spatial averages and proves
Theorems~\ref{QCLT} and \ref{FCLT}.

\section{Key estimates on Malliavin derivatives}
\subsection{Poisson Malliavin derivative and chain estimates}
Let $({\bf Z},\cZ,\fm)$ be the measure space
\[
({\bf Z},\cZ,\fm)
=\big(\bR_+\times\bR\times\bR_0,
\cB(\bR_+)\otimes\cB(\bR)\otimes\cB(\bR_0),
{\rm Leb}\otimes{\rm Leb}\otimes\nu\big),
\]
 and set $\cH=L^2({\bf Z},\cZ,\fm)$. For $q\ge1$, we write
$\|X\|_q=(\bE|X|^q)^{1/q}$. Constants denoted by $C$ or $C_{T,p}$ may
change from line to line. For Malliavin calculus with respect to the
compensated Poisson random measure $\widehat N$, we refer to \cite{NN18}.

\medskip

We recall the add-one Malliavin derivative. Let ${\bf N}_{\bf Z}$ be the space of all $\sigma$-finite counting measures
on $({\bf Z},\cZ)$. If $F$ is a real-valued $\cF^N$-measurable random
variable, then there exists a measurable function
$f:{\bf N}_{\bf Z}\to\bR$ such that $F=f(N)$. For $\xi\in{\bf Z}$, the
add-one operator is defined by
\begin{equation}
\label{add-one}
D_\xi^+F
:=
f(N+\delta_\xi)-f(N),
\end{equation}
whenever the right-hand side is well defined. 

It is known that the add-one operator $D^+$ coincides with the Malliavin
derivative operator $D$ on ${\rm dom}(D)$. Hence, we simply
write $D_\xi F$ instead of $D_\xi^+F$. For $\xi_1,\xi_2\in{\bf Z}$, the
second-order difference is given by
\begin{equation}
\label{second-add-one}
D^2_{\xi_1,\xi_2}F
=
f(N+\delta_{\xi_1}+\delta_{\xi_2})
-
f(N+\delta_{\xi_1})
-
f(N+\delta_{\xi_2})
+
f(N),
\end{equation}
whenever the right-hand side is well defined.

The following elementary chain estimates will be used repeatedly.

\begin{lemma}
\label{lem:chain-estimates}
Let $\phi:\bR\to\bR$ be a Lipschitz function with Lipschitz constant
${\rm Lip}(\phi)$. Then, whenever the following differences are well defined,
\begin{equation}
\label{D-chain}
|D_\xi\phi(F)|
\le
{\rm Lip}(\phi)|D_\xi F|,
\end{equation}
and
\begin{equation}
\label{D2-chain}
|D^2_{\xi_1,\xi_2}\phi(F)|
\le
{\rm Lip}(\phi)|D^2_{\xi_1,\xi_2}F|
+
2{\rm Lip}(\phi)
\min\big\{
|D_{\xi_1}F|,
|D_{\xi_2}F|
\big\}.
\end{equation}
\end{lemma}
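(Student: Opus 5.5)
The plan is to work entirely with the add-one representation \eqref{add-one}--\eqref{second-add-one} and reduce both claims to pointwise inequalities between real numbers. Write $F=f(N)$ and abbreviate
\[
a=f(N),\quad b_1=f(N+\delta_{\xi_1}),\quad b_2=f(N+\delta_{\xi_2}),\quad c=f(N+\delta_{\xi_1}+\delta_{\xi_2}).
\]
Since $\phi(F)=(\phi\circ f)(N)$, the derivatives of $\phi(F)$ are obtained by applying $\phi$ to these four values. In particular, $D_\xi\phi(F)=\phi(f(N+\delta_\xi))-\phi(f(N))$. The Lipschitz property of $\phi$ then gives \eqref{D-chain} immediately.

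For \eqref{D2-chain}, start from
\[
D^2_{\xi_1,\xi_2}\phi(F)=\phi(c)-\phi(b_1)-\phi(b_2)+\phi(a).
\]
Introduce the auxiliary point $c':=b_1+b_2-a$, which satisfies $c-c'=D^2_{\xi_1,\xi_2}F$. Then split
\[
D^2\phi(F)=\big[\phi(c)-\phi(c')\big]+\big[\phi(c')-\phi(b_1)-\phi(b_2)+\phi(a)\big].
\]
The first bracket is bounded by ${\rm Lip}(\phi)|D^2_{\xi_1,\xi_2}F|$.

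It remains to show that the second bracket is at most $2{\rm Lip}(\phi)\min\{|b_1-a|,|b_2-a|\}$. We pair the four terms in two different ways.
\begin{itemize}
\item Grouping as $[\phi(c')-\phi(b_1)]-[\phi(b_2)-\phi(a)]$, each difference has argument gap $c'-b_1=b_2-a=D_{\xi_2}F$. So the bracket is bounded by $2{\rm Lip}(\phi)|D_{\xi_2}F|$.
\item Grouping as $[\phi(c')-\phi(b_2)]-[\phi(b_1)-\phi(a)]$ gives, in the same way, the bound $2{\rm Lip}(\phi)|D_{\xi_1}F|$.
\end{itemize}
Taking the smaller of the two bounds yields the minimum term, and combining with the first bracket gives \eqref{D2-chain}.

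There is no real obstacle here. The only point needing care is choosing the intermediate point $c'$ so that both pairings are available simultaneously. One should also note that the statement assumes all the differences are well defined, i.e. the relevant values of $f$ are finite, so the manipulations are legitimate.
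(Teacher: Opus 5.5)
Your proposal is correct and matches the paper's proof: the paper likewise writes $D^2_{\xi_1,\xi_2}\phi(F)=\phi(F+a+b+c)-\phi(F+a)-\phi(F+b)+\phi(F)$ with $a=D_{\xi_1}F$, $b=D_{\xi_2}F$, $c=D^2_{\xi_1,\xi_2}F$, and then uses the bound ${\rm Lip}(\phi)\big(|c|+2\min\{|a|,|b|\}\big)$. The paper states that bound without proof; your auxiliary point $c'=F+a+b$ together with the two pairings is exactly the argument behind it, so you have supplied the step the paper leaves to the reader.
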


\begin{proof}
The first estimate follows directly from the Lipschitz property and
\eqref{add-one}. For the second estimate, we have
\begin{align*}
D^2_{\xi_1,\xi_2}\phi(F)
&=
\phi\big(f(N+\delta_{\xi_1}+\delta_{\xi_2})\big)
-
\phi\big(f(N+\delta_{\xi_1})\big)
-
\phi\big(f(N+\delta_{\xi_2})\big)
+
\phi(f(N)) \\
&=
\phi\left(
F+D_{\xi_1}F+D_{\xi_2}F+D^2_{\xi_1,\xi_2}F
\right)
-
\phi\left(F+D_{\xi_1}F\right)
-
\phi\left(F+D_{\xi_2}F\right)
+
\phi(F).
\end{align*}
We use the elementary Lipschitz estimate
\[
|\phi(x+a+b+c)-\phi(x+a)-\phi(x+b)+\phi(x)|
\le
{\rm Lip}(\phi)
\big(
|c|+2\min\{|a|,|b|\}
\big).
\]
Applying this estimate with $x=F$,  $a=D_{\xi_1}F$, $b=D_{\xi_2}F$ and $c=D^2_{\xi_1,\xi_2}F$ gives the desired bound.

\end{proof}

\subsection{Malliavin derivative estimates}
 We first show that the solution has finite $p$-th moments for
$p\in[2,3)$ with $m_p<\infty$, and then estimate its first and second
Malliavin derivatives. The arguments extend the corresponding pAm estimates,
so we emphasize the modifications caused by the nonlinear coefficient. We write $L_\sigma$ for a Lipschitz constant of $\sigma$. 
Throughout this section, for $p\ge2$, set
\[
H_t^{(p)}(x)
=
G_t^2(x)+G_t^p(x) \qquad \text{and}\qquad g_t^{(p)}(x)
=
G_t(x)+G_t^{2/p}(x).
\]

We begin with the moment estimate.
\begin{theorem}
\label{thm-u-pth-moment-nonlinear}
Let $p\in [2,3)$ be such that $m_p<\infty$. Let $u$ be the mild solution of
\eqref{nonlinear}. Then, for any $T>0$,
\begin{equation}
\label{u-pth-moment}
    \sup_{(t,x)\in [0,T]\times \bR} \bE|u(t,x)|^p <\infty .
\end{equation}
\end{theorem}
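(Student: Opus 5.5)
The plan is to run a Picard iteration and bound the $p$-th moment of the stochastic integral with Rosenthal's inequality for Poisson stochastic integrals (the Bichteler--Jacod / Kunita type inequality). Set $u_0\equiv1$ and
\[
u_{n+1}(t,x)=1+\int_0^t\int_{\bR}\int_{\bR_0}G_{t-s}(x-y)\sigma(u_n(s,y))z\,\widehat N(ds,dy,dz).
\]
For $p\ge2$ and a predictable integrand $\Phi$, the Rosenthal-type bound reads
\[
\bE\Big|\int\Phi\,d\widehat N\Big|^p\le C_p\Big(\bE\Big(\int|\Phi|^2\,d\fm\Big)^{p/2}+\bE\int|\Phi|^p\,d\fm\Big).
\]
Apply it with $\Phi(s,y,z)=G_{t-s}(x-y)\sigma(u_n(s,y))z$.

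The $|\Phi|^p$ term gives
\[
m_p\int_0^t\int_{\bR}G^p_{t-s}(x-y)\,\bE|\sigma(u_n(s,y))|^p\,dy\,ds.
\]
For the quadratic term, use Minkowski's inequality in $L^{p/2}(\Omega)$, which is legitimate since $p/2\ge1$. It gives
\[
\Big(\bE\Big(\int|\Phi|^2\,d\fm\Big)^{p/2}\Big)^{2/p}\le m_2\int_0^t\int_{\bR}G^2_{t-s}(x-y)\,\|\sigma(u_n(s,y))\|_p^2\,dy\,ds.
\]
Since $|\sigma(v)|\le|\sigma(0)|+L_\sigma|v|$, define $M_n(s):=\sup_y\|u_n(s,y)\|_p^p$. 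Using $\int_{\bR}G_s^q(y)\,dy=C_q s^{(1-q)/2}$ for $q=2$ and $q=p$, we get
\[
M_{n+1}(t)\le C\Big(1+\Big(\int_0^t(t-s)^{-1/2}(1+M_n(s))^{2/p}\,ds\Big)^{p/2}+\int_0^t(t-s)^{-(p-1)/2}(1+M_n(s))\,ds\Big).
\]
Integrability of $(t-s)^{-(p-1)/2}$ near $s=t$ needs $(p-1)/2<1$, that is $p<3$. This is exactly where the restriction $p\in[2,3)$ enters.

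Next, use Hölder's inequality on the first integral. Since $(1+M)^{2/p}\le 1+M$, we have
\[
\Big(\int_0^t(t-s)^{-1/2}(1+M_n)^{2/p}\,ds\Big)^{p/2}\le C_T\int_0^t(t-s)^{-1/2}(1+M_n(s))\,ds,
\]
applying Hölder with the measure $(t-s)^{-1/2}ds$, which has finite mass on $[0,T]$. Both kernels are of the form $(t-s)^{-\alpha}$ with $\alpha<1$, so we obtain a Volterra-type recursion
\[
M_{n+1}(t)\le C_T+C_T\int_0^t k(t-s)M_n(s)\,ds,\qquad k(s)=s^{-1/2}+s^{-(p-1)/2}\in L^1([0,T]).
\]
The fractional Gronwall lemma (for example Dalang's extension, Lemma 15 in Dalang 1999, or iterating the convolution of the kernel) then gives $\sup_n\sup_{t\le T}M_n(t)<\infty$. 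Measurability, predictability and spatial stationarity of $u_n$ are checked inductively, and stationarity justifies taking the supremum over $x$.

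The same estimate applied to differences finishes the proof. By the Lipschitz property, $\bE|u_{n+1}-u_n|^p$ obeys the linear recursion without the constant term. This forces $\sum_n\sup_{t\le T,x}\|u_{n+1}(t,x)-u_n(t,x)\|_p<\infty$, so $u_n$ converges in $L^p$ uniformly on $[0,T]\times\bR$. The limit coincides with the mild solution from \cite{BN17} by uniqueness in $L^2$, and \eqref{u-pth-moment} follows by Fatou's lemma. The main technical point I expect is the careful handling of the mixed exponent in the Rosenthal bound: the $(\cdot)^{p/2}$ power must be converted into a linear recursion for $M_n$. Hölder plus $(1+M)^{2/p}\le1+M$ handles this, though one must watch that constants stay uniform in $n$.
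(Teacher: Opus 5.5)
Your proposal is correct and is essentially the paper's argument: Picard iteration, a Rosenthal-type bound for the Poisson integral, linear growth of $\sigma$, and a renewal/Gronwall iteration uniform in $n$, with $p<3$ entering through $\int_0^T\int_{\bR}G_s^p(y)\,dy\,ds<\infty$ (the paper invokes \cite[Lemma~2.5]{BLW26}, which keeps the space-time kernel $H^{(p)}_{t-s}(x-y)$, whereas you take $\sup_y$ first and obtain a purely temporal Volterra inequality). One small slip: the linearization of $\bigl(\int_0^t(t-s)^{-1/2}(1+M_n(s))^{2/p}\,ds\bigr)^{p/2}$ comes from Jensen's inequality for the finite measure $(t-s)^{-1/2}ds$ applied directly to $(1+M_n)^{2/p}$, so that the power $p/2$ returns exactly $1+M_n$; the remark $(1+M)^{2/p}\le 1+M$ plays no role, and using it first would leave a superlinear term.
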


\begin{proof}
Let $(u_n)_{n\geq 0}$ be the sequence of Picard iterations, defined by
$u_0(t,x)=1$ and
\begin{equation}
\label{Picard}
u_{n+1}(t,x)=1+\int_0^t \int_{\bR} G_{t-s}(x-y) \sigma (u_n(s,y)) L(ds,dy),
\quad n\geq 0.
\end{equation}
By \cite[Lemma~2.5]{BLW26}, the linear growth of $\sigma$, and
$\int_0^T\int_{\bR}H_s^{(p)}(y)\,dy\,ds<\infty$,
\[
\bE|u_{n+1}(t,x)|^p
\le
C_{T,p}
+C_{T,p}
\int_0^t\int_{\bR}
H_{t-s}^{(p)}(x-y)\bE|u_n(s,y)|^p\,dy\,ds .
\]
Iterating this renewal inequality as in \cite[Theorem~2.7]{BLW26}
yields a bound uniform in $n$. Passing to the limit in the Picard sequence
proves \eqref{u-pth-moment}.
\end{proof}

\begin{theorem}
\label{key-th-Du}
\begin{itemize}
\item[(i)] For every $t\geq0$ and $x\in\bR$, $u(t,x)\in{\rm dom}(D)$.
\item[(ii)] Let $p\in[2,3)$ satisfy $m_p<\infty$. For any
$0\le r<t\le T$, $x,y\in\bR$, and $z\in\bR_0$,
\begin{equation}
\label{Du-pth-moment}
 \| D_{r,y,z}u(t,x)\|_p\le C_{T,p}|z|g_{t-r}^{(p)}(x-y),
\end{equation}
where $C_{T,p}>0$ is non-decreasing in $T$.
\end{itemize}
\end{theorem}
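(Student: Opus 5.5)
Our plan follows the pAm argument of \cite{BLW26}, with the chain estimate \eqref{D-chain} of Lemma~\ref{lem:chain-estimates} as the only new ingredient. We work with the Picard iterations \eqref{Picard}. Applying the add-one operator to \eqref{Picard} at $\xi=(r,y,z)$ with $r<t$ gives the equation satisfied by $D_\xi u_{n+1}(t,x)$. Adding the atom $\delta_\xi$ to $N$ produces two contributions. The first is the jump term $G_{t-r}(x-y)\sigma(u_n(r,y))z$. The second is the stochastic integral of the difference,
\[
D_\xi u_{n+1}(t,x)=G_{t-r}(x-y)\,z\,\sigma(u_n(r,y))+\int_r^t\int_{\bR}G_{t-s}(x-w)\,D_\xi\sigma(u_n(s,w))\,L(ds,dw).
\]
Predictability of $\sigma(u_n)$ means it does not see an atom at time $r$, so the stochastic integral starts at $r$. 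The same formula identifies $D_\xi u_{n+1}$ as the Malliavin derivative, once we know $u_n\in{\rm dom}(D)$ with $\bE\int_{\bf Z}|D_\xi u_n|^2\,\fm(d\xi)$ bounded uniformly in $n$. The $L^2$ case of part (ii) gives exactly this bound, because $\int_0^t\int_{\bR}G_{t-r}^2(x-y)\,dy\,dr<\infty$ and $m_2<\infty$. Closedness of $D$, together with $u_n\to u$ in $L^2$, then yields $u(t,x)\in{\rm dom}(D)$ and part (i).

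For part (ii), we first take $L^p$ norms. The first term is bounded by $|z|\,G_{t-r}(x-y)\sup\|\sigma(u_n)\|_p$, which Theorem~\ref{thm-u-pth-moment-nonlinear} and linear growth of $\sigma$ bound uniformly in $n$. For the stochastic integral we use the Rosenthal/BDG-type bound \cite[Lemma~2.5]{BLW26}, which controls the $p$-th moment by the $L^2$ and $L^p$ parts weighted by $m_2$ and $m_p$. By \eqref{D-chain}, $|D_\xi\sigma(u_n(s,w))|\le L_\sigma|D_\xi u_n(s,w)|$, and this is the only point where nonlinearity enters. Setting $\psi_n(t,x)=\|D_\xi u_n(t,x)\|_p$ gives the recursion
\[
\psi_{n+1}(t,x)^p\le C|z|^pG_{t-r}^p(x-y)+C\Big(\int_r^t\!\!\int_{\bR}G_{t-s}^2(x-w)\psi_n(s,w)^2\,dw\,ds\Big)^{p/2}+C\int_r^t\!\!\int_{\bR}G^p_{t-s}(x-w)\psi_n(s,w)^p\,dw\,ds .
\]
This has the same structure as the pAm recursion.

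The main obstacle is closing this recursion with the ansatz $\psi_n\le C|z|g^{(p)}_{t-r}(x-y)$ uniformly in $n$. We would import the heat-kernel convolution inequalities of \cite{BLW26}, namely bounds of the form
\[
\int_r^t\int_{\bR}G^2_{t-s}(x-w)\,\big(g^{(p)}_{s-r}(w-y)\big)^2\,dw\,ds\le C\big(g^{(p)}_{t-r}(x-y)\big)^2,
\]
and the analogous $L^p$ version with $G^p$ and $(g^{(p)})^p$. These require $p<3$ so that $G^p_s$ is integrable in time near $0$, since $\int G_s^p\,dx\sim s^{(1-p)/2}$. Such bounds alone only give a factor $C$, not a contraction, so we would iterate the recursion as a renewal inequality with a factor $e^{-\beta(t-r)}$ and $\beta$ large, or equivalently sum the iterated kernels. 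Either route yields a constant $C_{T,p}$, non-decreasing in $T$, independent of $n$. Passing to the limit $n\to\infty$ then gives \eqref{Du-pth-moment} for $u$: $D_\xi u_n\to D_\xi u$ in $L^2(\Omega\times{\bf Z})$ along a subsequence a.e.\ in $\xi$, and Fatou's lemma transfers the bound.
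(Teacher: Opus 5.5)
Your proposal is essentially the paper's proof: Picard iterates, the add-one equation for $D_\xi u_{n+1}$, the Rosenthal-type bound \cite[Lemma~2.5]{BLW26} with \eqref{D-chain} as the only nonlinear input, and the renewal (iterated-kernel) argument of \cite[Theorem~3.1]{BLW26}. The paper first linearizes the $L^2$ part into $C\int_r^t\int_{\bR}H^{(p)}_{t-s}(x-w)\,\bE|D_\xi u_n(s,w)|^p\,dw\,ds$, so that \eqref{Gp-bound} gives $\bE|D_\xi u_n(t,x)|^p\le C|z|^pH^{(p)}_{t-r}(x-y)$ directly and $(H^{(p)})^{1/p}\le g^{(p)}$ finishes. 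The one soft spot is your final step: closedness of $D$ only gives weak convergence of $Du_n$ to $Du$ in $L^2(\Omega;\cH)$, not the strong convergence you feed into Fatou, so you must either prove strong convergence separately or transfer the bound through weak closedness of the convex set $\{X:\|X_\xi\|_p\le C|z|g^{(p)}_{t-r}(x-y)\ \text{for }\fm\text{-a.e. }\xi\}$, as in \cite[Lemma~A.1]{BS26}.
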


\begin{proof}
By induction, $u_n(t,x)\in{\rm dom}(D)$ for every $n\ge0$.
Let $\xi=(r,y,z)$. From the recurrence relation \eqref{Picard} and the Heisenberg commutation principle, it follows that
\begin{equation*}
D_\xi u_{n+1}(t,x)=  zG_{t-r}(x-y)\sigma(u_n(r,y)) + \int_r^t\int_{\bR}
G_{t-s_1}(x-y_1)D_\xi\sigma(u_n(s_1,y_1))L(ds_1,dy_1).
\end{equation*}
 By \cite[Lemma~2.5]{BLW26}, \eqref{u-pth-moment}, and
\eqref{D-chain},
\begin{align*}
 \bE\left|D_{\xi}u_{n+1}(t,x)\right|^p \le &  2^{p-1}C_{T,p}|z|^pG_{t-r}^p(x-y)\\
 &+2^{p-1}A_{T,p}L_\sigma^p
\int_r^t\int_{\bR}
H_{t-s_1}^{(p)}(x-y_1)
\bE\left|D_{\xi}u_n(s_1,y_1)\right|^p
dy_1ds_1.
\end{align*}
The renewal argument in \cite[Theorem~3.1]{BLW26} now gives a bound
uniform in $n$. The closability of $D$ and convergence of the Picard sequence
yield (i) and \eqref{Du-pth-moment}.
\end{proof}

\medskip

For $k\ge1$ and $0\le r<t$, define
$T_k(r,t)=\{(t_1,\ldots,t_k):r<t_1<\cdots<t_k<t\}$. We use the
conventions $t_{k+1}=t$, $x_{k+1}=x$,
$\pmb{t}=(t_1,\ldots,t_k)$, and $\pmb{x}=(x_1,\ldots,x_k)$.
In \cite{BLW26},  it is proved that if $p\in[2,3)$ and
$T>0$, then for any $0\le r<t\le T$ and $x,y\in\bR$,
\begin{align}
\label{Gp-bound}
\sum_{k=1}^{\infty}
C_{T,p}^k
\int_{T_k(r,t)}
\int_{\bR^k}
\prod_{i=1}^{k}
H_{t_{i+1}-t_i}^{(p)}(x_{i+1}-x_i)
G_{t_1-r}^p(x_1-y)
\,d\pmb{x}\,d\pmb{t}
\le
C_{T,p}H_{t-r}^{(p)}(x-y).
\end{align}
The proof of \eqref{Gp-bound} splits into two cases:
the case in which all factors are of type $G^p$, and the
case in which at least one factor is of type $G^2$.
The following estimate is an extension of
\eqref{Gp-bound}. The difference is that the initial kernel
$G_{t_1-r}^p$ is replaced by $H_{t_1-r}^{(p)}$.

\begin{lemma}
Let $p\in[2,3)$ and $T>0$. Then, for any $0\le r<t\le T$ and
$x,y\in\bR$,
\begin{align}
\label{GpG2-bound}
\sum_{k=1}^{\infty}
C_{T,p}^k
\int_{T_k(r,t)}
\int_{\bR^k}
\prod_{i=1}^{k}
H_{t_{i+1}-t_i}^{(p)}(x_{i+1}-x_i)
H_{t_1-r}^{(p)}(x_1-y)
\,d\pmb{x}\,d\pmb{t}
\le
C_{T,p}
H_{t-r}^{(p)}(x-y).
\end{align}
\end{lemma}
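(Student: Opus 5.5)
Since $H_{t_1-r}^{(p)}=G_{t_1-r}^2+G_{t_1-r}^p$, the sum on the left of \eqref{GpG2-bound} splits into two series: one with initial kernel $G^p_{t_1-r}(x_1-y)$ and one with initial kernel $G^2_{t_1-r}(x_1-y)$. The first is exactly the left side of \eqref{Gp-bound}, so it is bounded by $C_{T,p}H^{(p)}_{t-r}(x-y)$. It remains to treat the series with initial kernel $G^2$. Expanding the product $\prod_{i=1}^k H^{(p)}_{t_{i+1}-t_i}(x_{i+1}-x_i)$ into $2^k$ products, each factor being of type $G^2$ or $G^p$, every resulting term now contains at least one factor of type $G^2$, namely the initial one. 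This is precisely the second case of the proof of \eqref{Gp-bound}. I would redo that case with the first kernel allowed to be $G^2$, absorbing the factor $2^k$ into $C_{T,p}^k$.

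The basic tools are the two semigroup-type identities. The first is the exact identity $G_s^2(x)=(4\pi s)^{-1/2}G_{s/2}(x)$. The second is the bound $G_s^p(x)=c_p s^{-(p-1)/2}G_{s/p}(x)$. Thus every factor is a power of time times a Gaussian kernel of variance proportional to the time increment. The spatial integrals $d\pmb{x}$ can then be done by the Chapman--Kolmogorov property. The variances do not add exactly, because the variances are scaled differently ($s/2$ versus $s/p$). I would handle this with the standard comparison $G_{a}*G_{b}=G_{a+b}$ and $G_{cs}\le C G_{s'}$-type bounds with $\sup$ of the heat kernel. Concretely, I would bound the convolution chain by $\prod_i (\text{time powers})\cdot G_{\theta (t-r)}(x-y)$ with $\theta\in[1/p,1/2]$ depending on the configuration. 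Alternatively, I would keep one factor unintegrated: in the chain I pick out the factor closest to the endpoint of largest time length and use $\sup_x$ bounds on the others.

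After the spatial integration, I would compare the resulting Gaussian with $H^{(p)}_{t-r}(x-y)$. A configuration containing a $G^2$ factor produces, after convolution, a kernel of order $G_{c(t-r)}(x-y)$ times a time integral. The time integral is a Dirichlet-type integral $\int_{T_k(r,t)}\prod_i (t_{i+1}-t_i)^{-\alpha_i}\,d\pmb t$ with $\alpha_i\in\{1/2,(p-1)/2\}$. Since $p<3$, all $\alpha_i<1$, and the Dirichlet formula gives $\prod\Gamma(1-\alpha_i)/\Gamma(\sum(1-\alpha_i))\,(t-r)^{\sum(1-\alpha_i)-1}$. The Gamma function in the denominator makes the sum over $k$ summable against $C^k_{T,p}$.

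The main obstacle is the final step: matching the output to $H^{(p)}_{t-r}(x-y)=G^2+G^p$ rather than to a wider Gaussian $G_{c(t-r)}$. In particular, I must recover the right power of $(t-r)$ near the diagonal and the right spatial decay. I would follow \cite{BLW26}: split the time simplex according to which increment $t_{i+1}-t_i$ is at least $(t-r)/(k+1)$. On that piece, the corresponding factor is bounded by a multiple of $H^{(p)}_{t-r}$-type decay, after using $G_{s/2}(x)\lesssim (t-r/s)^{1/2}G_{(t-r)/2}(x)$ appropriately, while the remaining factors are integrated out. This costs only a polynomial factor in $k$, which is harmless against the Gamma decay.
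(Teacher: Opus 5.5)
Your first paragraph is exactly the paper's proof. The paper splits $H^{(p)}_{t_1-r}=G^p_{t_1-r}+G^2_{t_1-r}$, bounds the $G^p$ part by \eqref{Gp-bound}, and expands the product for the $G^2$ part. It then notes that every resulting term contains a $G^2$ factor, so the second case of the proof of \eqref{Gp-bound} in \cite{BLW26} applies, and stops there.

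Your attempt to redo that second case yourself goes astray at the end. The ``main obstacle'' you name is not actually there, and the device you propose for it would not work.

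\textbf{The direct computation already closes the proof.} Your exact semigroup computation gives, for each configuration, a product of time powers times $G_{\theta(t-r)}(x-y)$. Every variance is $s_i/2$ or $s_i/p\le s_i/2$, so $\theta\in[1/p,1/2]$ and
\[
G_{\theta(t-r)}(x-y)\le \sqrt{p/2}\,G_{(t-r)/2}(x-y)=\sqrt{2\pi p\,(t-r)}\;G^2_{t-r}(x-y).
\]
So the output Gaussian is never wider than the one inside $G^2_{t-r}$. The Dirichlet integral contributes $(t-r)^{\sum_i(1-\alpha_i)-1}$. Because the initial factor is $G^2$, it has $1-\alpha_0=\tfrac12$, which exactly absorbs the extra $(t-r)^{1/2}$. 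The total exponent $\sum_i(1-\alpha_i)-\tfrac12$ is therefore nonnegative and at most $k/2$, so the time factor is bounded by $(1\vee T)^{k/2}$.

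Each term is then at most $C^{k+1}\,\Gamma\big(\sum_i(1-\alpha_i)\big)^{-1}G^2_{t-r}(x-y)\le C^{k+1}\,\Gamma\big(\sum_i(1-\alpha_i)\big)^{-1}H^{(p)}_{t-r}(x-y)$. Since $\sum_i(1-\alpha_i)\ge (k+1)(3-p)/2$, summing over the $2^k$ configurations and over $k$ gives the lemma. There is no need to recover the power of $(t-r)$ or the spatial decay separately.

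\textbf{Your last paragraph's plan fails as described.} Suppose you bound a single factor by $((t-r)/s_j)^{1/2}G_{(t-r)/2}(x_{j+1}-x_j)$ and then convolve with the remaining Gaussians. The result is a Gaussian in $x-y$ with variance strictly larger than $(t-r)/2$. That is precisely the wider kernel you wanted to avoid, and it is not bounded by $C\,H^{(p)}_{t-r}(x-y)$ uniformly as $|x-y|\to\infty$. If instead you sup-bound the remaining factors, you lose all decay in $x-y$. Drop the simplex splitting and finish the direct computation as above.
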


\begin{proof}
 Since $H_{t_1-r}^{(p)}=G_{t_1-r}^p+G_{t_1-r}^2$, the first
contribution is bounded by \eqref{Gp-bound}. For the second, expand the product
of the $H^{(p)}$ kernels. Every resulting term contains a $G^2$ factor, so the
second case in the proof of \eqref{Gp-bound} applies.
\end{proof}

\begin{theorem}
\label{key-th-D2u}
If $m_1<\infty$, then
\begin{itemize}
\item[(i)] for every $t>0$ and $x\in\bR$, $u(t,x)\in{\rm dom}(D^2)$;
\item[(ii)] if $p\in[2,3)$ and $m_p<\infty$, then for any
$0\le t\le T$, $x\in\bR$, and
$\xi_i=(r_i,y_i,z_i)\in{\bf Z}$ with $r_i\in[0,t]$, $i=1,2$,
\begin{equation}
\label{D2u-nonlinear-bound}
\begin{aligned}
\|D^2_{\xi_1,\xi_2}u(t,x)\|_p
&\le
C_{T,p}|z_1z_2|
\begin{cases}
g_{t-r_2}^{(p)}(x-y_2)
g_{r_2-r_1}^{(p)}(y_2-y_1),
& \text{if } r_1<r_2, \\[0.3em]
g_{t-r_1}^{(p)}(x-y_1)
g_{r_1-r_2}^{(p)}(y_1-y_2),
& \text{if } r_2<r_1,
\end{cases}
\\
&\quad+
C_{T,p}
\min\left\{
|z_1|g_{t-r_1}^{(p)}(x-y_1),
|z_2|g_{t-r_2}^{(p)}(x-y_2)
\right\}.
\end{aligned}
\end{equation}
\end{itemize}
\end{theorem}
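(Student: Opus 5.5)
We follow the same route as for Theorem~\ref{key-th-Du}: derive a recursion for the second derivative of the Picard iterates, bound it uniformly in $n$ by a renewal argument, and pass to the limit. By symmetry of $D^2_{\xi_1,\xi_2}$ we may assume $r_1<r_2$. The condition $m_1<\infty$ is used to obtain (i), as in the pAm case of \cite{BLW26}.

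\textbf{Recursion.} Applying the add-one operator $D_{\xi_1}$ to the formula for $D_{\xi_2}u_{n+1}(t,x)$, and using the Heisenberg commutation principle again, we get
\begin{align*}
D^2_{\xi_1,\xi_2}u_{n+1}(t,x)
&= z_2G_{t-r_2}(x-y_2)D_{\xi_1}\sigma(u_n(r_2,y_2))
+z_1G_{t-r_1}(x-y_1)D_{\xi_2}\sigma(u_n(r_1,y_1))\\
&\quad+\int_{r_1}^t\int_{\bR}G_{t-s}(x-w)D^2_{\xi_1,\xi_2}\sigma(u_n(s,w))\,L(ds,dw).
\end{align*}
Since $r_1<r_2$, we have $D_{\xi_2}u_n(r_1,y_1)=0$, so the second term vanishes.

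\textbf{Bounding the terms.} For the first term, \eqref{D-chain} and Theorem~\ref{key-th-Du} bound its $p$-norm by $C|z_1z_2|G_{t-r_2}(x-y_2)g^{(p)}_{r_2-r_1}(y_2-y_1)$. For the stochastic integral, we use \cite[Lemma~2.5]{BLW26} together with \eqref{D2-chain}. This gives a bound by
\[
\int H^{(p)}_{t-s}(x-w)\Big(\|D^2u_n(s,w)\|_p^p+\big\|\min\{|D_{\xi_1}u_n(s,w)|,|D_{\xi_2}u_n(s,w)|\}\big\|_p^p\Big)\,dw\,ds.
\]
The minimum is handled via $\|\min\{X,Y\}\|_p\le\min\{\|X\|_p,\|Y\|_p\}$ and Theorem~\ref{key-th-Du}. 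The source term for the recursion is therefore
\[
\min\Big\{|z_1|^p\int H^{(p)}_{t-s}(x-w)\,(g^{(p)}_{s-r_1}(w-y_1))^p,\;|z_2|^p\int H^{(p)}_{t-s}(x-w)\,(g^{(p)}_{s-r_2}(w-y_2))^p\Big\},
\]
where each integral is over $s$ and $w$. Note that $(g^{(p)})^p\le C H^{(p)}$. By the convolution bound underlying \eqref{GpG2-bound}, each integral is at most $C|z_i|^pH^{(p)}_{t-r_i}(x-y_i)$. Moreover, $(H^{(p)})^{1/p}\le C g^{(p)}$, which is where the minimum term in \eqref{D2u-nonlinear-bound} comes from.

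\textbf{Renewal step (main obstacle).} We then iterate the renewal inequality. The product source term behaves exactly as in the pAm case and reproduces $g_{t-r_2}g_{r_2-r_1}$ via \eqref{Gp-bound}. The minimum source term is the new difficulty: we must show that iterating the kernel $H^{(p)}$ against $\min\{a_1H^{(p)}_{\cdot-r_1},a_2H^{(p)}_{\cdot-r_2}\}$ still gives a bound of the form $C\min\{a_1H^{(p)}_{t-r_1},a_2H^{(p)}_{t-r_2}\}$. We plan to do this by bounding the iterated integral of the minimum by the minimum of the two iterated integrals, since the integration is monotone. Each of these is then controlled by \eqref{GpG2-bound}, which is precisely why that lemma replaces the initial $G^p$ by $H^{(p)}$. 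The resulting bound is uniform in $n$.

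\textbf{Conclusion.} Finally, we take $p$-th roots, using $(a+b)^{1/p}\le a^{1/p}+b^{1/p}$ and $(H^{(p)})^{1/p}\le g^{(p)}$. Closability of $D^2$ and $L^p$ convergence of the Picard iterates then transfer the bound to $u$.
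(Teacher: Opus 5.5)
Your estimate in (ii) for the Picard iterates follows the paper's proof. The recursion is the same: your extra term with $D_{\xi_2}\sigma(u_n(r_1,y_1))$ vanishes, and so does the integrand for $s<r_2$. You also use \eqref{D-chain}--\eqref{D2-chain} with $\|\min\{X,Y\}\|_p\le\min\{\|X\|_p,\|Y\|_p\}$, as the paper does. Finally, you bound the iterated integral of the minimum by the minimum of the two iterated integrals, each handled by \eqref{GpG2-bound}; for $j=1$ this uses the enlargement of $T_k(r_2,t)$ to $T_k(r_1,t)$.

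The gap is in part (i), and with it in the passage from $u_n$ to $u$.

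\textbf{What closability needs.} Closability is only usable once you know $\sup_n\bE\|D^2u_n(t,x)\|^2_{\cH^{\otimes2}}<\infty$. That bound gives weak limit points of $D^2u_n(t,x)$ in $L^2(\Omega;\cH^{\otimes2})$, which are then identified with $D^2u(t,x)$ (the paper uses \cite[Lemma~2.1]{BLW26}). A separate argument, such as \cite[Lemma~A.1]{BS26}, then carries the pointwise $L^p$ bound over to the weak limit. $L^p$ convergence of $u_n(t,x)$ by itself transfers nothing about $D^2u_n$.

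\textbf{How the uniform bound is obtained.} You integrate the $p=2$ case of your estimate against $\fm(d\xi_1)\fm(d\xi_2)$.
The product term is integrable because $m_2<\infty$.
The minimum term $\min\{|z_1|^2H^{(2)}_{t-r_1}(x-y_1),|z_2|^2H^{(2)}_{t-r_2}(x-y_2)\}$ need not be $\nu\otimes\nu$-integrable under $m_2<\infty$ alone. For example, it diverges for $\nu(dz)=|z|^{-1-\alpha}dz$ near $0$ with $\alpha\in[1,2)$.
The paper uses $\min\{|z_1|^2A,|z_2|^2B\}\le|z_1z_2|A^{1/2}B^{1/2}$ and $(H^{(2)}_s)^{1/2}=\sqrt2\,G_s$ to bound this contribution by $2m_1^2t^2$.

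\textbf{Where $m_1$ enters.} That step is the one place $m_1<\infty$ is needed, and it has no pAm counterpart. In the pAm, $D^2u$ carries only the product term and $m_2<\infty$ suffices. So attributing the role of $m_1$ to ``the pAm case of \cite{BLW26}'' misses the actual new step. Without it, neither (i) nor the limiting argument in (ii) is justified.
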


\begin{proof}
We prove the estimate for $0\le r_1<r_2<t\le T$. The case
$r_2<r_1$ follows by symmetry, and the diagonal $r_1=r_2$ is
$\fm^{\otimes2}$-null. By induction, $u_n(t,x)\in{\rm dom}(D^2)$ for every
$n\ge0$.
For the Picard approximations, applying $D_{\xi_1}$ to the equation for
$D_{\xi_2}u_{n+1}(t,x)$ gives
\begin{align}
\label{D2u-picard-nonlinear}
D^2_{\xi_1,\xi_2}u_{n+1}(t,x)
=&
z_2G_{t-r_2}(x-y_2)
D_{\xi_1}\sigma(u_n(r_2,y_2))
\nonumber\\
&+
\int_{r_2}^t\int_{\bR}
G_{t-s}(x-y)
D^2_{\xi_1,\xi_2}\sigma(u_n(s,y))
L(ds,dy).
\end{align}
Applying Rosenthal's inequality with
$\Phi(s,y)=D^2_{\xi_1,\xi_2}\sigma(u_n(s,y))$, we obtain
\begin{equation}
    \label{D2u-0}
\begin{aligned}
 \bE\left|D^2_{\xi_1,\xi_2}u_{n+1}(t,x)\right|^p
&\le
2^{p-1}
\bigg\{
|z_2|^pG_{t-r_2}^p(x-y_2)
\bE\left|D_{\xi_1}\sigma\big(u_n(r_2,y_2)\big)\right|^p\\
&\quad+
A_{T,p}
\int_{r_2}^t\int_{\bR}
H_{t-s}^{(p)}(x-y)
\bE\left|
D^2_{\xi_1,\xi_2}\sigma(u_n(s,y))
\right|^p
dy ds
\bigg\}.
\end{aligned}
\end{equation}
By \eqref{D-chain} and 
 \eqref{Du-pth-moment},
\begin{equation}
    \label{D2u-1}
\bE\left|D_{\xi_1}\sigma\big(u_n(r_2,y_2)\big)\right|^p\le
L_\sigma^p C_{T,p}^p
|z_1|^p
H_{r_2-r_1}^{(p)}(y_2-y_1). 
\end{equation}
By \eqref{D2-chain},
\begin{equation}
\label{D2u-2}
\begin{aligned}
\bE\left|D^2_{\xi_1,\xi_2}\sigma\big(u_n(s,y)\big)\right|^p
&\le
C_pL_\sigma^p
\bE\left|D^2_{\xi_1,\xi_2}u_n(s,y)\right|^p
+
C_pL_\sigma^p
\bE\left[
\min_{i=1,2}
\left|D_{\xi_i}u_n(s,y)\right|^p
\right]  \\
&\le
C_pL_\sigma^p
\bE\left|D^2_{\xi_1,\xi_2}u_n(s,y)\right|^p
+
C_{T,p}L_\sigma^p
\min_{i=1,2}
\left\{
|z_i|^pH_{s-r_i}^{(p)}(y-y_i)
\right\}.
\end{aligned}
\end{equation}
Combining \eqref{D2u-0} with \eqref{D2u-1} and \eqref{D2u-2}, we obtain
\begin{equation}
\label{D2-renewal-nonlinear}
\begin{aligned}
\bE\left|D^2_{\xi_1,\xi_2}u_{n+1}(t,x)\right|^p
&\le
C_{T,p}|z_1z_2|^p
G_{t-r_2}^p(x-y_2)
H_{r_2-r_1}^{(p)}(y_2-y_1)
\\
&\quad+
C_{T,p}
\int_{r_2}^t\int_{\bR}
H_{t-s}^{(p)}(x-y)
\bE\left|D^2_{\xi_1,\xi_2}u_n(s,y)\right|^p
dy ds
\\
&\quad+
C_{T,p}
\int_{r_2}^t\int_{\bR}
H_{t-s}^{(p)}(x-y)
\min_{i=1,2}
\left\{
|z_i|^pH_{s-r_i}^{(p)}(y-y_i)
\right\}
dy ds .
\end{aligned}
\end{equation}
 For $n\ge2$, iterating \eqref{D2-renewal-nonlinear} and using
$D^2_{\xi_1,\xi_2}u_1=0$, we obtain
\begin{align*}
\bE\left|D^2_{\xi_1,\xi_2}u_n(t,x)\right|^p&\le C_{T,p}|z_1z_2|^p
H_{r_2-r_1}^{(p)}(y_2-y_1) \left[ G_{t-r_2}^p(x-y_2)+ \sum_{k=1}^{n-2}
I_1^{(k)}\right]+\sum_{k=1}^{n-1}
 I_2^{(k)}
\end{align*}
where
\begin{equation*}
I_1^{(k)}= C_{T,p}^k
\int_{T_k(r_2,t)}
\int_{\bR^k}
\prod_{i=1}^{k}
H_{t_{i+1}-t_i}^{(p)}(x_{i+1}-x_i)\cdot
G_{t_1-r_2}^p(x_1-y_2)
d\pmb{x}d\pmb{t}  
\end{equation*}
and 
\begin{equation*}
I_2^{(k)}= C_{T,p}^k
\int_{T_k(r_2,t)}
\int_{\bR^k}
\prod_{i=1}^{k}
H_{t_{i+1}-t_i}^{(p)}(x_{i+1}-x_i) \cdot
\min_{i=1,2}\left\{
|z_i|^pH_{t_1-r_i}^{(p)}(x_1-y_i)
\right\}
d\pmb{x}d\pmb{t},
\end{equation*}
By \eqref{Gp-bound},
\begin{equation}
\label{D2-source-iteration-bound}
G_{t-r_2}^p(x-y_2)
+
\sum_{k=1}^{n-2} I_1^{(k)}
\le
C_{T,p}H_{t-r_2}^{(p)}(x-y_2).
\end{equation}

For each fixed $j\in\{1,2\}$, since $\min_{i=1,2}
\left\{
|z_i|^pH_{t_1-r_i}^{(p)}(x_1-y_i)
\right\}
\le
|z_j|^pH_{t_1-r_j}^{(p)}(x_1-y_j)$, and since $T_k(r_2,t)\subset T_k(r_j,t)$ for $j=1,2$, we obtain
\begin{align*}
\sum_{k=1}^{n-1} I_2^{(k)}
&\le
\sum_{k=1}^{n-1}
C_{T,p}^k
\int_{T_k(r_j,t)}
\int_{\bR^k}
\prod_{i=1}^{k}
H_{t_{i+1}-t_i}^{(p)}(x_{i+1}-x_i)
|z_j|^pH_{t_1-r_j}^{(p)}(x_1-y_j)
d\pmb{x}d\pmb{t}.
\end{align*}
Therefore, by \eqref{GpG2-bound}
\begin{equation*}
\sum_{k=1}^{n-1} I_2^{(k)}
\le
C_{T,p}
|z_j|^pH_{t-r_j}^{(p)}(x-y_j),
\qquad j=1,2.
\end{equation*}
Taking the minimum over $j=1,2$, we get
\begin{equation}
\label{D2-min-iteration-bound}
\sum_{k=1}^{n-1} I_2^{(k)}
\le
C_{T,p}
\min\left\{
|z_1|^pH_{t-r_1}^{(p)}(x-y_1),
|z_2|^pH_{t-r_2}^{(p)}(x-y_2)
\right\}.
\end{equation}

Using \eqref{D2-source-iteration-bound} and
\eqref{D2-min-iteration-bound}, we obtain, uniformly in $n\ge2$,
\begin{align}
\label{D2u-G-nonlinear}
\bE\left|D^2_{\xi_1,\xi_2}u_n(t,x)\right|^p
&\le
C_{T,p}|z_1z_2|^p
H_{t-r_2}^{(p)}(x-y_2)
H_{r_2-r_1}^{(p)}(y_2-y_1)
\nonumber\\
&\quad+
C_{T,p}
\min\left\{
|z_1|^pH_{t-r_1}^{(p)}(x-y_1),
|z_2|^pH_{t-r_2}^{(p)}(x-y_2)
\right\}.
\end{align}

Taking $p=2$ in \eqref{D2u-G-nonlinear} and integrating with respect to
$\fm(d\xi_1)\fm(d\xi_2)$ gives
\[
\sup_{n\ge2}
\bE\|D^2u_n(t,x)\|_{\cH^{\otimes2}}^2<\infty.
\]
The product term is integrable because $m_2<\infty$. For the minimum
term, use

\[
\min\{|z_1|^2A,|z_2|^2B\}\le |z_1z_2|A^{1/2}B^{1/2},
\qquad (H_s^{(2)})^{1/2}=\sqrt2\,G_s.
\]
Its contribution is bounded by
\[
\begin{aligned}
&\int_0^t\int_0^t\int_{\bR^2}\int_{\bR_0^2}
\min\left\{|z_1|^2H_{t-r_1}^{(2)}(x-y_1),
|z_2|^2H_{t-r_2}^{(2)}(x-y_2)\right\}
\nu(dz_1)\nu(dz_2)dy_1dy_2dr_1dr_2 \\
&\qquad\le
2m_1^2
\int_0^t\int_0^t
\left(\int_{\bR}G_{t-r_1}(x-y_1)\,dy_1\right)
\left(\int_{\bR}G_{t-r_2}(x-y_2)\,dy_2\right)
dr_1dr_2  \\
&\qquad=
2m_1^2t^2
<\infty .
\end{aligned}
\]
 Thus the minimum term is integrable because $m_1<\infty$.

\medskip
Apply \cite[Lemma~2.1]{BLW26} with $F_n=u_n(t,x)$ and $F=u(t,x)$
for fixed $(t,x)\in\bR_+\times\bR$. It follows that
$u(t,x)\in{\rm dom}(D^2)$ and that $D^2u_n(t,x)$ converges weakly to
$D^2u(t,x)$ in $L^2(\Omega;\cH^{\otimes2})$. This proves (i). Part (ii)
follows from \cite[Lemma~A.1]{BS26}.
\end{proof}

\section{Gaussian fluctuations of the spatial averages}
\label{sec:Gaussian-fluctuations}

We now prove the Gaussian fluctuation results. Ergodicity, the limiting
covariance, and the FCLT follow the pAm arguments after inserting the nonlinear
moment bounds. The QCLT additionally requires control of the minimum term in
the second Malliavin derivative estimate.

For $0\le r<t$, $R>0$ and $y\in\bR$, set
\begin{equation}
\label{def-phi}
\varphi_{t,R}(r,y)
=
\int_{-R}^{R}G_{t-r}(x-y)\,dx,
\qquad
\varphi_{t,R}(y)=\varphi_{t,R}(0,y).
\end{equation}

\subsection{Ergodicity and limiting covariance}
\begin{theorem}
\label{ergodic-cov}
\begin{itemize}
\item[(i)] For every $t>0$, the random field $\{u(t,x)\}_{x\in\bR}$ is
strictly stationary and ergodic. Consequently,
\[
\lim_{R\to\infty}\frac{1}{R}F_R(t)=0
\qquad
\text{a.s. and in }L^2(\Omega).
\]

\item[(ii)] For every $s,t>0$,
\begin{equation}
\label{limit-cov}
\lim_{R\to\infty}
\frac{1}{R}\bE[F_R(t)F_R(s)]
=
\Sigma_{t,s}
:=
2m_2\int_0^{t\wedge s}
\bE\big|\sigma(u(r,0))\big|^2\,dr.
\end{equation}
In particular,
\[
\sigma_R^2(t):=\operatorname{Var}(F_R(t))
\sim
\Sigma_{t,t}R,
\qquad R\to\infty.
\]
\end{itemize}
\end{theorem}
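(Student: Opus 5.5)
For part (i), stationarity will come from the invariance of the Lévy white noise under spatial shifts. For $a\in\bR$, let $\theta_a N$ be the image of $N$ under $(t,x,z)\mapsto(t,x+a,z)$. Since the intensity $dt\,dx\,\nu(dz)$ is translation invariant, $\theta_a N\stackrel{d}{=}N$. The Picard iterates \eqref{Picard} are measurable functionals of $N$ that commute with $\theta_a$, so $u_n(t,x+a)=u_n(t,x)\circ\theta_a$. Passing to the limit in $L^2$ gives the same identity for $u$, and with it the strict stationarity of $\{u(t,x)\}_{x\in\bR}$.

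For ergodicity I will use the Poincaré-type covariance criterion, as in the pAm case \cite{BLW26}. It suffices to show
\[
\frac1{R^2}\operatorname{Var}\Big(\int_{-R}^R g(u(t,x))\,dx\Big)\to0
\]
for all bounded Lipschitz $g$; by a standard density argument, finite-dimensional versions $\prod_j g_j(u(t,x+\zeta_j))$ are handled the same way. The Poisson Poincaré inequality gives $\operatorname{Var}(F)\le\bE\|DF\|_{\cH}^2$. By \eqref{D-chain} and \eqref{Du-pth-moment} with $p=2$,
\[
\operatorname{Var}\Big(\int_{-R}^R g(u(t,x))dx\Big)\le C m_2\int_0^t\int_{\bR}\Big(\int_{-R}^R G_{t-r}(x-y)dx\Big)^2 dy\,dr\le C_t R,
\]
using $g^{(2)}=2G$ and $\int\varphi_{t,R}(r,y)^2dy\le 2R$. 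Hence ergodicity holds. The ergodic theorem then gives $F_R(t)/R\to0$ a.s.; the $L^2$ convergence follows from the same variance bound $\bE F_R(t)^2\le C_tR$, which is in fact an $O(R)$ estimate and not just $o(R^2)$.

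For part (ii), I will compute the covariance exactly with the Itô isometry for $\widehat N$. From \eqref{solution},
\[
\bE[(u(t,x)-1)(u(s,x')-1)]=m_2\int_0^{t\wedge s}\int_{\bR}G_{t-r}(x-y)G_{s-r}(x'-y)\bE[\sigma(u(r,y))\sigma(u(r,y))]\,dy\,dr.
\]
By stationarity, the expectation equals $\eta(r):=\bE|\sigma(u(r,0))|^2$, which is bounded on $[0,T]$ by Theorem~\ref{thm-u-pth-moment-nonlinear} and linear growth. Integrating in $x,x'$ over $[-R,R]$ gives
\[
\bE[F_R(t)F_R(s)]=m_2\int_0^{t\wedge s}\eta(r)\int_{\bR}\varphi_{t,R}(r,y)\varphi_{s,R}(r,y)\,dy\,dr.
\]
For each fixed $r$, $\frac1R\int\varphi_{t,R}(r,y)\varphi_{s,R}(r,y)dy\to2$, since both functions converge to $\mathbf 1_{[-R,R]}$ up to boundary layers of width $O(1)$. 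This ratio is bounded by $2$ uniformly, so dominated convergence yields \eqref{limit-cov}. Because $\sigma(1)\neq0$, continuity of $r\mapsto\eta(r)$ near $0$ (from $L^2$-continuity of $u$ at $t=0$) gives $\Sigma_{t,t}>0$, and therefore $\sigma_R^2(t)\sim\Sigma_{t,t}R$.

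The main obstacle I expect is the ergodicity step. It needs the correct ergodicity criterion for Poisson-driven stationary fields, namely checking the variance decay for a convergence-determining class of functionals of finitely many points. It also needs care that the Poincaré inequality applies, i.e.\ that $g(u(t,x))\in{\rm dom}(D)$, which follows from Theorem~\ref{key-th-Du}(i) together with \eqref{D-chain}. Compared with these points, the covariance computation in (ii) is routine.
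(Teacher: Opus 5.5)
Your proposal is correct and follows essentially the same route as the paper: stationarity from the shift invariance of the noise, ergodicity from the Poisson Poincar\'e-inequality criterion (the content of \cite[Lemma~4.2]{BZ24}) combined with the $p=2$ case of \eqref{Du-pth-moment}, and the covariance limit from the It\^o isometry, stationarity, $\frac1R\int_{\bR}\varphi_{t,R}(r,y)\varphi_{s,R}(r,y)\,dy\to2$ and dominated convergence. Your additional check that $\Sigma_{t,t}>0$, using $\sigma(1)\neq0$ and $u(r,0)\to1$ in $L^2$ as $r\to0$, is needed for $\sigma_R^2(t)\sim\Sigma_{t,t}R$ to be meaningful, and the paper leaves it implicit.
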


\begin{proof}
 Part~(i) follows as in \cite[Theorem~1.1]{BZ24}: stationarity follows
from the spatial homogeneity of $L$, and ergodicity from
\cite[Lemma~4.2]{BZ24} and Theorem~\ref{key-th-Du} with $p=2$.

As in \cite[proof of Theorem~1.1(ii)]{BLW26}, stochastic Fubini's
theorem, the It\^o isometry, and stationarity give
\[
\bE[F_R(t)F_R(s)]
=
m_2\int_0^{t\wedge s}
\bE|\sigma(u(r,0))|^2
\int_{\bR}
\varphi_{t,R}(r,y)\varphi_{s,R}(r,y)\,dy\,dr,
\]
 and, for $r<t\wedge s$,
\[
\frac1R\int_{\bR}
\varphi_{t,R}(r,y)\varphi_{s,R}(r,y)\,dy
\longrightarrow 2,
\qquad R\to\infty.
\]
Dominated convergence yields \eqref{limit-cov}.
\end{proof}

\subsection{Quantitative central limit theorem}
We first introduce some notation. For \(t>0\), \(R>0\),
\(p>0\) and \(y\in\bR\), define
\[
K_t^{(p)}(y)
=
G_t(y)+G_{pt}(y),
\qquad
\Phi_{t,R}^{(p)}(y)
=
\int_{-R}^{R}K_t^{(p)}(x-y)\,dx .
\]

We use the estimates from \cite[Lemmas~4.3--4.5]{BLW26}, collected
below for convenience.
\begin{lemma}
Fix $T>0$. For every $0<t\le T$, $R>0$ and $p>0$,
\begin{align}
\label{Phi-bound}
\Phi_{t,R}^{(p)}(y)
&\le 2,
 \\
\label{K-power-bound}
\big(K_t^{(p)}(x)\big)^\theta
&\lesssim K_{t/\theta}^{(p)}(x),
\qquad 0<\theta<1,\\
\label{K-power-bound-large}
\big(K_t^{(p)}(x)\big)^\theta
&\lesssim t^{-\frac{\theta-1}{2}}K_{t/\theta}^{(p)}(x),
\qquad \theta>1.
\end{align}
\end{lemma}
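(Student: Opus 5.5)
The three estimates are elementary consequences of explicit Gaussian computations, and I would prove them in the order stated. Throughout, constants are allowed to depend on $T$, $p$ and $\theta$.

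For \eqref{Phi-bound}, I would use only that $G_s$ is a probability density for every $s>0$. Since $K_t^{(p)}\ge0$, enlarging the domain of integration from $[-R,R]$ to $\bR$ gives
\[
\Phi_{t,R}^{(p)}(y)\le\int_{\bR}G_t(x-y)\,dx+\int_{\bR}G_{pt}(x-y)\,dx=2 .
\]
This holds uniformly in $t$, $R$ and $y$.

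For \eqref{K-power-bound} and \eqref{K-power-bound-large}, the key step is the exact scaling identity for powers of the heat kernel. For $\theta>0$ and $s>0$,
\[
G_s(x)^\theta=(2\pi s)^{-\theta/2}e^{-\theta x^2/(2s)}
=(2\pi s)^{\frac{1-\theta}{2}}\,\theta^{-1/2}\,G_{s/\theta}(x).
\]
This follows by comparing with $G_{s/\theta}(x)=(2\pi s/\theta)^{-1/2}e^{-\theta x^2/(2s)}$. I would apply it with $s=t$ and with $s=pt$, which produce $G_{t/\theta}$ and $G_{pt/\theta}$ respectively. Their sum is exactly $K_{t/\theta}^{(p)}$.

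To pass from the two pieces to $K_t^{(p)}$ itself, I would use the elementary inequality $(a+b)^\theta\le c_\theta(a^\theta+b^\theta)$ for $a,b\ge0$, with $c_\theta=\max\{1,2^{\theta-1}\}$. This gives
\[
\big(K_t^{(p)}(x)\big)^\theta\le c_\theta\theta^{-1/2}\Big[(2\pi t)^{\frac{1-\theta}{2}}G_{t/\theta}(x)+(2\pi pt)^{\frac{1-\theta}{2}}G_{pt/\theta}(x)\Big].
\]
It then remains to handle the powers of $t$ in the two cases.

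When $0<\theta<1$, the exponent $(1-\theta)/2$ is positive. Hence $t^{(1-\theta)/2}\le T^{(1-\theta)/2}$ for $t\le T$, and the prefactors are bounded by a constant depending on $T$, $p$ and $\theta$. This yields \eqref{K-power-bound}.

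When $\theta>1$, the same prefactor equals $t^{-\frac{\theta-1}{2}}$ times a constant depending on $p$ and $\theta$. This yields \eqref{K-power-bound-large} directly.

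I do not expect a genuine obstacle. The only point requiring care is to track that the $t$-dependence enters solely through the factor $t^{(1-\theta)/2}$. This factor is harmless on $(0,T]$ when $\theta<1$, and it must be retained explicitly when $\theta>1$.
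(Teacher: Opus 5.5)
Your proof is correct and complete. Bounding $\Phi_{t,R}^{(p)}$ by the total mass of the two Gaussian densities gives the first estimate. The exact scaling identity $G_s^\theta=(2\pi s)^{\frac{1-\theta}{2}}\theta^{-1/2}G_{s/\theta}$, applied with $s=t$ and $s=pt$ and combined with $(a+b)^\theta\le\max\{1,2^{\theta-1}\}(a^\theta+b^\theta)$, gives the other two with constants uniform in $t\in(0,T]$.

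The paper does not prove this lemma; it imports the three bounds from \cite[Lemmas~4.3--4.5]{BLW26}. Your direct Gaussian computation is a self-contained verification of what is cited there. In particular, it retains the factor $t^{-\frac{\theta-1}{2}}$ when $\theta>1$, which the $\gamma_2$ and $\gamma_5$ estimates use.
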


The following result is the nonlinear analogue of
\cite[Lemma~4.2]{BLW26}. It follows by integrating the estimates of
Theorems~\ref{key-th-Du} and \ref{key-th-D2u} over $[-R,R]$, so we omit its proof.

\begin{lemma}
\label{lem:FR-derivative-nonlinear}
Let \(p\in[1,\frac32)\) satisfy \(m_{2p}<\infty\), and assume that $m_1<\infty$.

\begin{itemize}
\item[(i)] For any \(0\le r<t\le T\), \(y\in\bR\) and \(z\in\bR_0\),
\[
\|D_{r,y,z}F_R(t)\|_{2p}
\lesssim
|z|\Phi^{(p)}_{t-r,R}(y).
\]

\item[(ii)] Let \(\xi_i=(r_i,y_i,z_i)\), \(i=1,2\). For any
\(0\le r_1,r_2<t\le T\) with \(r_1\neq r_2\), \(y_1,y_2\in\bR\) and
\(z_1,z_2\in\bR_0\),
\[
\|D^2_{\xi_1,\xi_2}F_R(t)\|_{2p}
\lesssim
A_R(\xi_1,\xi_2)+B_R(\xi_1,\xi_2),
\]
where
\[
A_R(\xi_1,\xi_2)
=
|z_1z_2|
\begin{cases}
K^{(p)}_{r_2-r_1}(y_2-y_1)\Phi^{(p)}_{t-r_2,R}(y_2),
& r_1<r_2,\\[1mm]
K^{(p)}_{r_1-r_2}(y_1-y_2)\Phi^{(p)}_{t-r_1,R}(y_1),
& r_2<r_1,
\end{cases}
\]
and
\[
B_R(\xi_1,\xi_2)
=
\int_{-R}^{R}
\min\Big\{
|z_1|K^{(p)}_{t-r_1}(x-y_1),
|z_2|K^{(p)}_{t-r_2}(x-y_2)
\Big\}\,dx.
\]
Moreover, for \(0<\theta<1\), define
\[
\Psi_{\theta,R}(r_1,y_1,r_2,y_2)
=
\int_{-R}^{R}
\big[K^{(p)}_{t-r_1}(x-y_1)\big]^\theta
\big[K^{(p)}_{t-r_2}(x-y_2)\big]^{1-\theta}
\,dx.
\]
Then the interpolation inequality \(\min\{a,b\}\le a^\theta b^{1-\theta}\)
gives
\begin{equation}
\label{theta}
B_R(\xi_1,\xi_2)
\le
|z_1|^\theta |z_2|^{1-\theta}
\Psi_{\theta,R}(r_1,y_1,r_2,y_2).
\end{equation}
\end{itemize}
\end{lemma}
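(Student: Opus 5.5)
The lemma has three parts: the first-derivative bound (i), the second-derivative bound (ii), and the interpolation inequality \eqref{theta}. The plan is to pass from the pointwise $L^p$ bounds of Theorems~\ref{key-th-Du} and \ref{key-th-D2u} to bounds on $F_R(t)$, using Minkowski's integral inequality in $L^{2p}(\Omega)$. We apply those theorems with exponent $2p\in[2,3)$, which is legitimate because $m_{2p}<\infty$ and $m_1<\infty$. Since $D$ and $D^2$ are add-one difference operators, they commute with the Lebesgue integral $\int_{-R}^R(u(t,x)-1)\,dx$. This follows from Fubini together with the integrability guaranteed by the moment bounds. Hence
\[
\|D_{\xi}F_R(t)\|_{2p}\le \int_{-R}^{R}\|D_\xi u(t,x)\|_{2p}\,dx,
\qquad
\|D^2_{\xi_1,\xi_2}F_R(t)\|_{2p}\le \int_{-R}^{R}\|D^2_{\xi_1,\xi_2} u(t,x)\|_{2p}\,dx .
\]

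The one genuinely new step is a pointwise comparison between kernels:
\[
g^{(2p)}_s(x)=G_s(x)+G_s(x)^{1/p}\lesssim K^{(p)}_s(x)
\]
for $s\le T$. We check it directly: $G_s(x)^{1/p}=(2\pi s)^{-1/(2p)}e^{-x^2/(2ps)}$ and $G_{ps}(x)=(2\pi ps)^{-1/2}e^{-x^2/(2ps)}$, so
\[
\frac{G_s(x)^{1/p}}{G_{ps}(x)}=C_p\, s^{\frac12-\frac1{2p}}\le C_{p}T^{\frac12-\frac1{2p}},
\]
where we used $p\ge1$.

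With this comparison, part (i) is immediate. Theorem~\ref{key-th-Du} gives $\|D_{r,y,z}u(t,x)\|_{2p}\lesssim |z|K^{(p)}_{t-r}(x-y)$, and integrating over $x\in[-R,R]$ produces $|z|\Phi^{(p)}_{t-r,R}(y)$.

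For part (ii), take $r_1<r_2$ and use \eqref{D2u-nonlinear-bound} with exponent $2p$. In the product term, the factor $g^{(2p)}_{r_2-r_1}(y_2-y_1)$ does not depend on $x$, so it comes out of the integral and is bounded by $K^{(p)}_{r_2-r_1}(y_2-y_1)$. The remaining factor $g^{(2p)}_{t-r_2}(x-y_2)$ integrates over $x$ to at most a constant times $\Phi^{(p)}_{t-r_2,R}(y_2)$. Together these give $A_R$. In the minimum term, we apply the comparison inside each entry of the minimum, using that $\min$ is monotone in each argument; integrating over $x$ then gives $B_R$. The case $r_2<r_1$ is symmetric.

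Finally, \eqref{theta} follows by applying $\min\{a,b\}\le a^\theta b^{1-\theta}$ pointwise in $x$ and then pulling the factor $|z_1|^\theta|z_2|^{1-\theta}$ out of the integral. The main point to verify carefully is the interchange of $D$, $D^2$ with the $dx$-integral, i.e.\ that $F_R(t)$ lies in the relevant domains. I would justify this via the Picard approximations, or via the add-one representation plus Fubini, using that the right-hand sides are integrable in $x$ on the compact set $[-R,R]$.
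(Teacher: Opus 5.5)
Your proposal is correct and is the argument the paper means when it says the lemma ``follows by integrating the estimates of Theorems~\ref{key-th-Du} and \ref{key-th-D2u} over $[-R,R]$.'' You apply those theorems with exponent $2p\in[2,3)$, move $D$ and $D^2$ inside the $dx$-integral, use Minkowski's inequality in $L^{2p}(\Omega)$, and finish with the pointwise bound $\min\{a,b\}\le a^\theta b^{1-\theta}$. Your explicit comparison $G_s^{1/p}\le C_p\,s^{\frac12-\frac1{2p}}G_{ps}$, which gives $g^{(2p)}_s\lesssim K^{(p)}_s$ for $s\le T$ since $p\ge1$, is the step the paper leaves implicit, and you verify it correctly.
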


\begin{proof}[ Proof of Theorem~\ref{QCLT}]
Fix \(t>0\). By Theorem~\ref{ergodic-cov},
\(\sigma_R^2(t)\sim \Sigma_{t,t}R\), and hence
\(\sigma_R^{-2p}(t)\lesssim R^{-p}\). Since \(m_1+m_{2p}<\infty\),
we have \(m_a<\infty\) for all \(a\in[1,2p]\). We apply the second-order Poincar\'e inequality on the Poisson space, in the
form of \cite[Theorem~3.4]{T25}.
Let \(\gamma_1,\ldots,\gamma_7\) be the corresponding quantities. It remains
to prove that
\begin{equation}
\label{gamma-goal}
\gamma_i
\lesssim
R^{-\left(1-\frac1p\right)},\qquad i=1,\ldots,7.
\end{equation}

{\bf Estimate of $\gamma_1$.}
\begin{equation}
\label{gamma1}
\gamma_1^p
\lesssim
R^{-p}
\int_{\bf Z}
\left[
\int_{\bf Z}
\big\|D_{\xi_2}F_R(t)\big\|_{2p}
\big\|D^2_{\xi_1,\xi_2}F_R(t)\big\|_{2p}
\fm(d\xi_2)
\right]^p
\fm(d\xi_1).
\end{equation}
By Lemma~\ref{lem:FR-derivative-nonlinear},
\[
\gamma_1^p
\lesssim
\gamma_{1,A}^p+\gamma_{1,B}^p,
\]
where
\[
\gamma_{1,A}^p
:=
R^{-p}
\int_{\bf Z}
\left[
\int_{\bf Z}
|z_2|\Phi_{t-r_2,R}^{(p)}(y_2) A_R(\xi_1,\xi_2)\,\fm(d\xi_2)
\right]^p
\fm(d\xi_1),
\]
and
\[
\gamma_{1,B}^p
:=
R^{-p}
\int_{\bf Z}
\left[
\int_{\bf Z}
|z_2|\Phi_{t-r_2,R}^{(p)}(y_2)  B_R(\xi_1,\xi_2)\,\fm(d\xi_2)
\right]^p
\fm(d\xi_1).
\]
 The $A_R$ contribution is the pAm term estimated in \cite{BLW26}.
We therefore treat only the new $B_R$ contribution. We use the same
decomposition for $\gamma_2,\gamma_5,\gamma_6$, and $\gamma_7$.

 Applying \eqref{theta} with $\theta=1/p$ gives
\begin{equation}
\label{gamma1-B}
\gamma^p_{1,B}\lesssim R^{-p}m_1m_{2-\frac{1}{p}}^p
\int_0^t\int_{\bR}
\left[
\int_0^t\int_{\bR}
\Phi^{(p)}_{t-r_2,R}(y_2)
\Psi_{1/p,R}(r_1,y_1,r_2,y_2)
\,dy_2dr_2
\right]^p
dy_1dr_1 .
\end{equation}
By \eqref{Phi-bound} and \eqref{K-power-bound},
\[
\int_0^t\int_{\bR}
\Phi_{t-r_2,R}^{(p)}(y_2)
\big[K_{t-r_2}^{(p)}(x-y_2)\big]^{1-1/p}
dy_2dr_2
\lesssim 1.
\]
Hence
\begin{align*}
 \gamma_{1,B}^p
&\lesssim
R^{-p}
\int_0^t\int_{\bR}
\left[
\int_{-R}^{R}
\big[K_{t-r_1}^{(p)}(x-y_1)\big]^{1/p}
dx
\right]^p
dy_1dr_1\\
&= R^{-p}
\int_0^t
\left\|\mathbf 1_{[-R,R]}*\big[K_{t-r_1}^{(p)}\big]^{1/p}\right\|_{L^p(\bR)}^p
\,dr_1  \lesssim R^{-(p-1)}.
\end{align*}
Here the last inequality uses Young's inequality and \eqref{K-power-bound}.

\medskip
{\bf Estimate of $\gamma_2$.}  Applying \eqref{theta} with
$\theta=1/(2p)$ gives
\begin{equation}
\label{gamma2-B}
(\gamma_{2,B})^p \lesssim m_1m_{2-1/p}^p
R^{-p}
\int_0^t\int_{\bR}
\left[
\int_0^t\int_{\bR}
\Psi_{1/(2p),R}(r_1,y_1,r_2,y_2)^2
\,dy_2dr_2
\right]^p
dy_1dr_1 .
\end{equation}
For fixed $r_2$, by Young's convolution inequality,
\begin{align*}
&\int_{\bR}
\Psi_{1/(2p),R}(r_1,y_1,r_2,y_2)^2
dy_2 \le
\left(
\int_{-R}^{R}
[K^{(p)}_{t-r_1}(x-y_1)]^{\frac{1}{2p}} dx
\right)^2
\int_{\bR}
[K^{(p)}_{t-r_2}(y)]^{2(1-\frac{1}{2p})}dy .
\end{align*}
By \eqref{K-power-bound-large},
\begin{equation*}
    \int_0^t\int_{\bR}
\Psi_{\theta,R}(r_1,y_1,r_2,y_2)^2\,dy_2dr_2
\lesssim
\left[
\int_{-R}^{R}
\big[K_{t-r_1}^{(p)}(x-y_1)\big]^{\frac{1}{2p}} dx
\right]^2 \int_0^t
(t-r_2)^{-\frac12(1-\frac1p)}dr_2.
\end{equation*}
The second integral is finite and hence 
\begin{align*}
(\gamma_{2,B})^p
&\lesssim
R^{-p}
\int_0^t\int_{\bR}
\left[
\int_{-R}^{R}
\big[K_{t-r_1}^{(p)}(x-y_1)\big]^{\frac{1}{2p}} dx
\right]^{2p}
dy_1dr_1\\
& =
R^{-p}
\int_0^t
\left\|
\mathbf 1_{[-R,R]}*
\big[K_{t-r_1}^{(p)}\big]^{\frac1{2p}}
\right\|_{L^{2p}(\bR)}^{2p}
\,dr_1   \lesssim R^{-(p-1)}.
\end{align*}

{\bf Estimates of $\gamma_3$ and $\gamma_4$.}
The quantities $\gamma_3$ and $\gamma_4$ involve only the first Malliavin
derivative. Hence their estimates are the same as in the pAm case. 

\medskip
{\bf Estimate of $\gamma_5$.}  Applying \eqref{theta} with
$\theta=1/(2p)$ gives
\begin{equation}
\label{gamma5}
\gamma_{5,B}^p
\lesssim
m_1m_{2p-1}R^{-p}
\int_0^t\int_0^t\int_{\bR^2}
\Psi_{1/(2p),R}(r_1,y_1,r_2,y_2)^{2p}
\,dy_1dy_2dr_1dr_2 .
\end{equation}
 By Young's inequality,
\begin{align*}
\int_{\bR}
\Psi_{1/(2p),R}(r_1,y_1,r_2,y_2)^{2p}\,dy_1 &=\left\|
\mathbf 1_{[-R,R]}(\cdot)
\big[K_{t-r_2}^{(p)}(\cdot-y_2)\big]^{1-\frac1{2p}}
*
\big[K_{t-r_1}^{(p)}\big]^{\frac1{2p}}
\right\|_{L^{2p}(\bR)}^{2p} \\
&\le  \left\|
\mathbf 1_{[-R,R]}(\cdot)
\big[K_{t-r_2}^{(p)}(\cdot-y_2)\big]^{1-\frac1{2p}}
\right\|_{L^{2p}(\bR)}^{2p}
\left\|
\big[K_{t-r_1}^{(p)}\big]^{\frac1{2p}}
\right\|_{L^1(\bR)}^{2p}\\
&\lesssim \int_{-R}^{R}
\big[K_{t-r_2}^{(p)}(x-y_2)\big]^{2p-1}\,dx .
\end{align*}
Therefore
\begin{align*}
\gamma_{5,B}^p &\lesssim R^{-p} \int_0^t\int_0^t\int_{\bR}\int_{-R}^{R}
\big[K_{t-r_2}^{(p)}(x-y_2)\big]^{2p-1}
\,dxdy_2dr_1dr_2 \\
&\lesssim R^{-p} R \int_0^t\int_0^t
(t-r_2)^{-(p-1)}
\,dr_1dr_2  \lesssim R^{-(p-1)}.
\end{align*}

{\bf Estimate of $\gamma_6$.} Applying \eqref{theta} with
$\theta=1-1/p$ gives
\begin{equation}
\label{gamma6-B}
\gamma_{6,B}^p
\lesssim
m_{2p-1}m_1 R^{-p}
\int_0^t\int_0^t\int_{\bR^2}
\big[\Phi_{t-r_1,R}^{(p)}(y_1)\big]^p
\Psi_{1-\frac1p,R}(r_1,y_1,r_2,y_2)^p
dy_1dy_2dr_1dr_2.
\end{equation}
For fixed \(r_1,r_2,y_1\), by Young's inequality,
\begin{align*}
\int_{\bR}
\Psi_{1-\frac1p,R}(r_1,y_1,r_2,y_2)^p dy_2 &=   \left\|
\mathbf 1_{[-R,R]}(\cdot)
\big[K_{t-r_1}^{(p)}(\cdot-y_1)\big]^{1-\frac1p}
*
\big[K_{t-r_2}^{(p)}\big]^{\frac1p}
\right\|_{L^p(\bR)}^p  \\
&\le \left\|
\mathbf 1_{[-R,R]}(\cdot)
\big[K_{t-r_1}^{(p)}(\cdot-y_1)\big]^{1-\frac1p}
\right\|_{L^p(\bR)}^p
\left\|
\big[K_{t-r_2}^{(p)}\big]^{\frac1p}
\right\|_{L^1(\bR)}^p  \\
&\lesssim \int_{-R}^{R}
\big[K_{t-r_1}^{(p)}(x-y_1)\big]^{p-1}\,dx
\lesssim 1,
\end{align*}
where the last inequality follows from \eqref{K-power-bound}, since
\(p-1\in(0,1)\). Therefore, from \eqref{gamma6-B}
\begin{align*}
\gamma_{6,B}^p
&\lesssim  R^{-p}
\int_0^t\int_0^t\int_{\bR}
\big[\Phi_{t-r_1,R}^{(p)}(y_1)\big]^p
\,dy_1dr_1dr_2  \lesssim R^{-(p-1)}.
\end{align*}

{\bf Estimate of $\gamma_7$.}  Applying \eqref{theta} with
$\theta=2p-2$ gives
\begin{align}
\label{gamma7}
\gamma_{7,B}^p
&\lesssim m_{2p-1}m_1 R^{-p}
\int_0^t\int_0^t\int_{\bR^2}
\Phi_{t-r_1,R}^{(p)}(y_1)
\nonumber\\[-1mm]
&\quad\times
\big[\Phi_{t-r_2,R}^{(p)}(y_2)\big]^{2p-2}
\Psi_{\theta,R}(r_1,y_1,r_2,y_2)
dy_1dy_2dr_1dr_2.
\end{align}
Since $2p-2,3-2p\in(0,1)$, \eqref{Phi-bound} and
\eqref{K-power-bound} imply, uniformly in $x,r_1,r_2$,
\begin{align*}
\int_{\bR}
\Phi_{t-r_1,R}^{(p)}(y_1)
\big[K_{t-r_1}^{(p)}(x-y_1)\big]^{2p-2}dy_1
&\lesssim1,\\
\int_{\bR}
\big[\Phi_{t-r_2,R}^{(p)}(y_2)\big]^{2p-2}
\big[K_{t-r_2}^{(p)}(x-y_2)\big]^{3-2p}dy_2
&\lesssim1.
\end{align*}
 Fubini's theorem in \eqref{gamma7} therefore gives
$\gamma_{7,B}^p\lesssim R^{-p}\int_{-R}^{R}\,dx\lesssim R^{-(p-1)}$.

\medskip
Combining the above estimates, we obtain \eqref{gamma-goal}.
\end{proof}

\subsection{Functional central limit theorem}

\begin{proof}[ Proof of Theorem~\ref{FCLT}]
We verify convergence of the finite-dimensional distributions and
tightness. The finite-dimensional distributions converge by the argument in
\cite[proof of Theorem~1.3]{BLW26}, with the covariance limit from
Theorem~\ref{ergodic-cov}. For tightness, stochastic Fubini's theorem gives
\begin{equation}
\label{FR-decomp}
F_R(t)
=
\int_0^t\int_{\bR}
\varphi_{t,R}(s,y)\sigma(u(s,y))L(ds,dy)
=
C_R(t)+M_R(t),
\end{equation}
where
\begin{align*}
C_R(t)
&=
\int_0^t\int_{\bR}
\left(
\varphi_{t,R}(s,y)-\mathbf 1_{[-R,R]}(y)
\right)
\sigma(u(s,y))L(ds,dy), \\
M_R(t)
&=
\int_0^t\int_{\bR}
\mathbf 1_{[-R,R]}(y)
\sigma(u(s,y))L(ds,dy).
\end{align*}
Since \(\sigma\) is Lipschitz and \(2p<3\), Theorem~\ref{thm-u-pth-moment-nonlinear}
applied with exponent \(2p\) yields
\[
\sup_{(t,x)\in[0,T]\times\bR}
\bE|\sigma(u(t,x))|^{2p}<\infty .
\]

 This uniform moment bound is precisely the input needed in the
tightness estimates of \cite[proof of Theorem~1.3]{BLW26}. Repeating those
estimates for $C_R$ and $M_R$ proves tightness in both the $J_1$ and uniform
topologies, and completes the proof.
\end{proof}

\medskip

{\it Acknowledgment.}
 The authors thank Raluca Balan for helpful comments.


\begin{thebibliography}{99}

\bibitem{BLW26}
Balan, R.M., Le Gall, M. and Wang, J. (2026).
Gaussian fluctuations for the parabolic Anderson model with L\'evy white noise.
Preprint, arXiv:2607.02742.

\bibitem{BN17}
Balan, R.M. and Ndongo, C.B. (2017).
Malliavin differentiability of solutions of SPDEs with L\'evy white noise.
{\em Int. J. Stoch. Anal.}
{\bf 2017}, Article ID 9693153, 1--9.

\bibitem{BS26}
Balan, R.M. and Salins, M. (2026).
Gaussian fluctuations for the nonlinear stochastic heat equation with drift.
{\em Electron. J. Probab.}
{\bf 31}, Paper No.~122, 1--31.

\bibitem{BZ24}
Balan, R.M. and Zheng, G. (2024).
Hyperbolic Anderson model with L\'evy white noise:
spatial ergodicity and fluctuation.
{\em Trans. Amer. Math. Soc.}
{\bf 377}, 4171--4221.

\bibitem{BZ26}
Balan, R.M. and Zheng, G. (2026).
Central limit theorem for stochastic nonlinear wave equation
with pure-jump L\'evy white noise.
{\em Electron. J. Probab.}
{\bf 31}, Paper No.~70, 1--46.

\bibitem{C17}
Chong, C. (2017).
Stochastic PDEs with heavy-tailed noise.
{\em Stochastic Process. Appl.}
{\bf 127}, 2262--2280.

\bibitem{CDH19}
Chong, C., Dalang, R.C. and Humeau, T. (2019).
Path properties of the solution to the stochastic heat equation with L\'evy
noise.
{\em Stoch. Partial Differ. Equ. Anal. Comput.}
{\bf 7}, 123--168.

\bibitem{CK19}
Chong, C. and Kevei, P. (2019).
Intermittency for the stochastic heat equation with L\'evy noise.
{\em Ann. Probab.}
{\bf 47}, 1911--1948.

\bibitem{HNV20}
Huang, J., Nualart, D. and Viitasaari, L. (2020).
A central limit theorem for the stochastic heat equation.
{\em Stochastic Process. Appl.}
{\bf 130}, 7170--7184.

\bibitem{HNVZ20}
Huang, J., Nualart, D., Viitasaari, L. and Zheng, G. (2020).
Gaussian fluctuations for the stochastic heat equation with colored noise.
{\em Stoch. Partial Differ. Equ. Anal. Comput.}
{\bf 8}, 402--421.

\bibitem{KNP21}
Khoshnevisan, D., Nualart, D. and Pu, F. (2021).
Spatial stationarity, ergodicity, and CLT for parabolic Anderson model
with delta initial condition in dimension $d\geq 1$.
{\em SIAM J. Math. Anal.}
{\bf 53}, 2084--2133.

\bibitem{NN18}
Nualart, D. and Nualart, E. (2018).
{\em Introduction to Malliavin Calculus}.
Cambridge University Press, Cambridge.

\bibitem{NS24}
Nualart, D. and Saikia, B. (2024).
Gaussian fluctuations of spatial averages of a system of stochastic heat
equations.
{\em Stoch. Anal. Appl.}
{\bf 42}, 1185--1196.

\bibitem{NXZ22}
Nualart, D., Xia, P. and Zheng, G. (2022).
Quantitative central limit theorems for the parabolic Anderson model driven
by colored noises.
{\em Electron. J. Probab.}
{\bf 27}, Paper No.~120, 1--43.

\bibitem{T25}
Trauthwein, T. (2025).
Quantitative CLTs on the Poisson space via Skorohod estimates and
$p$-Poincar\'e inequalities.
{\em Ann. Appl. Probab.}
{\bf 35}, 1716--1754.

\end{thebibliography}
\end{document}